\newcommand{\x }{\bm{x}}
\def\nst2{\| _*} 
\def\a12{A_h ^{1/2} } 
\def\d{{\mathrm d}}
\def\tr|{|\! |\! |}
\def\R {{\mathbb R}}
\def\E{{\mathcal{E}}}
\def\L{{\mathscr{L}}}
\def \x{\overline x}
\def \a{\alpha }
\def\T_h{{{\mathcal T}_h}}
\def\<{{\langle }}
\def\>{{\rangle }}
\DeclarePairedDelimiter{\norm}{\lVert}{\rVert}
\DeclareSymbolFont{matha}{OML}{txmi}{m}{it}
\DeclareMathSymbol{\varv}{\mathbf}{matha}{118}
\def\L{{\mathscr{L}}}
\newcommand\restr[2]{{% we make the whole thing an ordinary symbol
  \left.\kern-\nulldelimiterspace % automatically resize the bar with \right
  #1 % the function
  \vphantom{\big|} % pretend it's a little taller at normal size
  \right|_{#2} % this is the delimiter
  }}
\NewDocumentCommand{\dgal}{sO{}m}{%
  \IfBooleanTF{#1}
    {\dgalext{#3}}
    {\dgalx[#2]{#3}}%
}
\NewDocumentCommand{\dgalext}{m}{%
  \sbox0{%
    \mathsurround=0pt % just for safety
    $\left\{\vphantom{#1}\right.\kern-\nulldelimiterspace$%
  }%
  \sbox2{\{}%
  \ifdim\ht0=\ht2
    \{\kern-.625\wd2 \{#1\}\kern-.625\wd2 \}%
  \else
    \left\{\kern-.7\wd0\left\{#1\right\}\kern-.7\wd0\right\}%
  \fi
}
\NewDocumentCommand{\dgalx}{om}{%
  \sbox0{\mathsurround=0pt$#1\{$}%
  \sbox2{\{}%
  \ifdim\ht0=\ht2
    \{\kern-.625\wd2 \{#2\}\kern-.625\wd2 \}%
  \else
    \mathopen{#1\{\kern-.7\wd0 #1\{}
    #2
    \mathclose{#1\}\kern-.7\wd0 #1\}}
  \fi
}
\renewcommand{\R}{{\mathbb{R}}}
\renewcommand{\T}{\mathbb{T}^d}
\begin{document}

%\keywords{$\Gamma$ Convergence and deep Ritz methods}
%\subjclass[2010]{65M15, 65M12.}

\title
{\sc On the stability and convergence of Physics Informed Neural Networks}

 \author{ \name{\begin{center}{Dimitrios Gazoulis $^{1,2}$, \ Ioannis Gkanis $^{2}$ \and Charalambos G.\ Makridakis $^{1, 2, 3}$   }\end{center}} 
         \addr{\begin{center}{ {$^{1}$ DMAM, University of Crete, Greece \\
         $^{2 }$ IACM-FORTH, Greece
   }\\
    $^{3 }$ MPS, University of Sussex, United Kingdom} \end{center} }}

%\author{\name \sc Dimitrios Gazoulis \email {\normalfont{d.gazoulis@iacm.forth.gr}} \\
%\addr 
%%Department of Mathematics,\\ School of Applied Mathematical and Physical Sciences, \\
%University of Crete %Zografou 15780, Greece, and 
%\\IACM-FORTH 
%\AND
%\name  \sc Ioannis Gkanis \email {\normalfont{i.gkanis@iacm.forth.gr}} \\
%\addr 
%%Department of Mathematics,\\ School of Applied Mathematical and Physical Sciences, \\
%IACM-FORTH 
%\AND
%\name \sc Charalambos G.\ Makridakis  \email{\normalfont{c.g.makridakis@iacm.forth.gr}} \\
%\addr IACM-FORTH  \\
%\addr   University of
%Crete\\    
%%  71110 Heraklion - Crete,
%%Greece,
%%\\    Department of Mathematics, 
%University of Sussex
%%, Brighton BN1 9QH, United Kingdom
%} 

\editor{}
\maketitle

\abstract{Physics Informed Neural Networks is a numerical method which uses neural networks to approximate solutions of partial differential equations. It has received a lot of attention and is currently used in numerous physical and engineering problems. The mathematical understanding of these methods is limited, and in particular, it seems  that, a consistent notion of stability is missing. Towards addressing this issue we    consider  model problems of partial differential equations, namely   linear elliptic and parabolic PDEs. 
% We consider problems with different stability properties, and problems with time discrete training. 
Motivated by tools of nonlinear calculus of variations we systematically show that coercivity of the energies and associated compactness provide a consistent framework for stability. For time discrete training we show that if these properties fail to hold then methods may become unstable. Furthermore, using tools of $\Gamma-$convergence we provide new
convergence results for weak solutions by   only requiring that the neural network spaces are chosen to have suitable approximation properties.
While our analysis is motivated by neural network–based approximation spaces, the framework developed here is applicable to any class of discrete functions satisfying the relevant approximation  properties, and hence may serve as a foundation for the broader study of  variational     nonlinear PDE solvers.}

\section{Introduction}\label{Se:1}

\subsection{PDEs and Neural Networks}\label{SSe:1.1}
In this work\footnote{V2 | V1 : 08/2023 arXiv:2308.05423} we consider  model problems of partial differential equations  (PDEs) approximated by  deep neural learning (DNN) algorithms. In particular we 
focus on linear elliptic and parabolic PDEs and Physics Informed Neural Networks, i.e., algorithms where the discretisation is based on the minimisation of the $L^2$ norm of the residual over a set of neural networks with a given architecture. 
The classical framework of numerical analysis, which evaluates algorithmic performance through notions such as stability and approximability, remains essential in this setting. Furthermore, in many scientific applications, a desirable feature of any numerical method is the preservation of fundamental qualitative properties of the underlying continuous model at the discrete level.
In a broad class of problems, stability and structural preservation are intimately related. The principal objective of this work is to introduce a new notion of stability tailored to DNN-based discretisations of PDEs and to demonstrate convergence under appropriate approximability assumptions on the neural network class, provided the training process yields stable approximations.

The integration of machine learning techniques with PDE-based modelling has generated significant activity across multiple directions: the development of neural solvers for PDEs, operator learning, 
%frameworks targeting families of differential operators, 
and hybrid statistical–physical methodologies for uncertainty quantification and statistical inference. Despite notable empirical progress, foundational mathematical understanding
%—particularly regarding convergence, stability, and generalisation—
remains limited.

PDEs play a fundamental role in both the modelling of physical systems and the development of computational   tools in science and engineering.
Numerical solution of PDEs utilising neural networks is at an early stage and has received a lot of attention. Such methods have significantly %dramatically 
different characteristics compared to more traditional methods, and have been proved quite effective, e.g.,  in solving problems in high-dimensions, or when  statistical and physical models must be combined.  
Among these approaches, Physics Informed Neural Networks (PINNs)   have emerged as one  of the most prominent frameworks, see \cite{Karniadakis:pinn:orig:2019}, \cite{Karniadakis:pinn:dxde:2021}.  Residual based methods were considered in   \cite{Lagaris_1998}, \cite{Berg_2018},   \cite{SSpiliopoulos:2018} and their references.  Other neural network methods for differential equations and related problems include, for example, \cite{kevr_1992discrete}, \cite{e2017deep}, \cite{kharazmi2019variational}, \cite{Xu}, 
\cite{chen2022deep},  \cite{georgoulis2023discrete}, 
\cite{Grohs:space_time:2023}. The term \emph{Physics Informed Neural Networks} was introduced in the highly influential paper \cite{Karniadakis:pinn:orig:2019}.
It was then used extensively in numerous physical and engineering problems;  for a broader perspective of the related methodologies and the importance of the NN methods for scientific applications, see e.g., \cite{karniadakis_kevr_2021physics}.   
 
\noindent

Nevertheless, significant theoretical questions remain unresolved. Among the key limitations is the lack of a coherent framework for analysing stability,   a crucial tool in a priori error analysis and the convergence of algorithms, 
\cite{Lax_Ricth_1956}, 
which, also provides valuable information for fixed values of the discretisation parameters, i.e., in the pre-asymptotic regime as  it is well known that unstable methods have poor algorithmic performance.
Moreover, stability is inherently problem (and algorithmic) dependent and is not always straightforward to characterise.
%Among other limitations, there appears to be a lack of a consistent treatment of stability issues.
%Stability is a crucial tool in a priori error analysis and the convergence of algorithms, 
%\cite{Lax_Ricth_1956}. 
% On the other hand,  stability is a problem \red{and algorithmic} dependent notion and not always easy to identify.  
Towards addressing these issues, we consider representative classes of linear elliptic and parabolic PDEs, incorporating in addition, time-discrete training  for parabolic problems. The training process, while instrumental in determining the overall performance of neural network–based methods, introduces significant complexity into the analysis. As a first step, we therefore focus exclusively on time-discrete training, which allows for a more tractable yet still informative theoretical analysis.
 
Drawing on tools from nonlinear calculus of variations, we systematically demonstrate that the coercivity of energies and the associated compactness establish {an} appropriate framework for stability. In more complicated nonlinear problems, lower semicontinuity of the energies should be ensured as well.  Within this variational perspective, we interpret stability as a necessary condition that the loss functionals (interpreted as energies) must satisfy to guarantee consistent and robust convergence behaviour—particularly when the neural networks are sufficiently expressive.  For time-discrete training, we show that if these properties are not satisfied, the methods become unstable and appear not to converge. 
It is our aim in this work is to emphasise the fundamental role played by the loss functionals and their connection to classical notions of stability in Numerical Analysis. 

To formalise this, we adopt an abstract discrete approximation framework which  is consistent with the  properties of neural network spaces,  yet remains general enough to allow a  variety of discretisation schemes. 
%Within this setting, and employing techniques from $\Gamma$-convergence, we establish new convergence results for weak solutions of PDEs, without imposing restrictive assumptions on the structure of the discrete space or on the nature of the minimisers.
%
%To this end, we adopt an abstract discrete framework for the approximation spaces—one that is consistent with the approximation properties of neural network spaces, yet general enough to allow broader applicability.
Within this setting, and by employing $\Gamma$-convergence techniques, we establish new convergence results of discrete minimisers  to weak solutions without requiring additional structural assumptions on the discrete spaces or the minimisers.  

%In conclusion, alongside the  framework for stability and convergence introduced in this work, we also address the following:
In addition to developing this  framework, we make the following contributions:

	 (i) $\ $ We demonstrate that, contrary to common beliefs in the field, residual methods like PINNs, while requiring additional smoothness at the discrete level, can still approximate solutions under low %minimal 
	 regularity assumptions.
	 
	 (ii) $\ $We reveal a novel and intriguing connection between the \emph{training} process and the typical stability concepts of classical numerical schemes.
	 
\noindent
These findings enhance both our algorithmic and mathematical understanding of this class of methods, with broader implications for the behaviour of machine learning algorithms. The proposed framework helps to \emph{explain} potential pitfalls observed in experiments and highlights the critical role of the loss functional in shaping the method's behaviour. In particular, the final loss functional (including training) is a central component of the algorithm, governing its stability and convergence.

It is important to note that the present work focuses exclusively on stability issues associated with the loss functional.
Challenges intrinsic to neural network architectures are beyond the scope of our analysis.
In more complex neural network applications —such as generative modelling or complicated PDEs— the choice of discrete spaces may also affect stability. However, in the context considered here, we require only that the discrete functions are capable of approximating functions with prescribed smoothness.
Finally, while our analysis is motivated by neural network–based approximation spaces, the framework developed here is applicable to any class of discrete functions satisfying the relevant approximation and compactness properties, and hence may serve as a foundation for the broader study of  variational neural based or not PDE solvers.

\subsection{Model problems and their  Machine Learning approximations}\label{Se:1NN}
In this work we consider linear elliptic and parabolic PDEs. To fix notation, 
we consider  simple  boundary value problems  of the form,
\begin{equation}\label{EllipticPDE}
\left \{
\begin{alignedat}{3}
&L\, u = f\quad &&\text{in}\,\,&&\varOmega \\[2pt]
&u =0 \quad &&\text{on}\,\,&&\partial\varOmega \\[2pt]
\end{alignedat}
\right .
\end{equation}
where $ u : \Omega \subset \mathbb{R}^d \rightarrow \mathbb{R} , \: \Omega $ is an open, bounded set with Lipschitz  boundary, %$ u \in H^2(\Omega)  \cap  H^1_0(\Omega) \\ , $
$f \in L^2 (\Omega)  $ and $ L $ a self-adjoint elliptic operator of the form
%\begin{equation}\label{EllipticOperator}
%\begin{gathered}
$Lu := - \sum_{1 \leq i,j \leq d} \big ( a_{ij} u_{x_i } \big )_{  x_j}  +cu , $
$c>0, $ and where $ a_{ij}$ are smooth enough and 
satisfy standard ellipticity and positivity assumptions.
Further assumptions on $L$ will be discussed in the next sections. Dirichlet boundary conditions were selected for simplicity. The results of this work can be extended to other boundary conditions with appropriate technical modifications.

We shall study the corresponding parabolic problem as well. We use the compact notation 
   $ \Omega_T = \Omega \times (0,T] ,$
 $\partial \Omega_T = \partial \Omega \times (0,T] $ for some fixed time $ T>0.$
We consider the initial-boundary value problem
\begin{align}\label{ParabolicPDE}
\begin{cases}
u_t + Lu = f,  \;\;\;\; \textrm{in} \;\: \Omega_T, \\
 u =  0, \;\;\;\;\;\;\;\;\;\; \textrm{on} \;\: \partial \Omega \times (0,T] , \\
  u =  u^0, \;\;\;\;\;\;\;\;\;\; \;\;\textrm{in} \;\: \Omega \, ,
%\times \lbrace t=0 \rbrace
\end{cases}
\end{align}
where $ f \in L^2( \Omega_T) ,\; u^0 \in H^1_0(\Omega) $ and $ L $ is as in \eqref{EllipticPDE}.
In the sequel we shall use the compact operator notation $\L $ for either $u_t + Lu$ or $  Lu $ for the parabolic or the elliptic case correspondingly. 
The associated energies used will be the $L^2-$residuals  
\begin{equation}\label{Functional}
\mathcal{E}(v) = \int_{\Omega_D} | \L v  - f |^2 \d\overline x +\, \mu   \int_{\Omega } |  v  - u^0 |^2 \, \d  x + \tau  \, \int_{\partial \Omega  _T} |  v | ^2 \, \d \overline{ S }
\end{equation}
defined over smooth enough functions and  domains $\Omega_D$ being  $\Omega_T$ or $\Omega $ (with measures $d\overline x\ $)  for the parabolic or the elliptic case correspondingly. Clearly, the coefficient $\mu \geq 0$ of the initial condition is set to zero in the elliptic case. 
It is typical to consider regularised versions of $\mathcal{E}(v)$ as well. Such functionals have the form 
\begin{equation}\label{Functional_reg}
\mathcal{E}_{reg}(v) = \mathcal{E}(v) + \lambda \mathcal{J}  ( v )   \, , 
\end{equation}
where the regularisation parameter $\lambda=\lambda _{reg}>0$ is in principle small and $ \mathcal{J}  ( v ) $   is an appropriate functional (often a power of a semi-norm) reflecting the qualitative properties of the regularisation. 
The formulation of the method extends naturally to nonlinear versions of the generic 
operator $ \L v  - f ,$ whereby in principle both   $ \L   $ and  $f $ might depend on $v$.
 {The  loss \eqref{Functional} is a $L^2$-residual functional that imposes boundary conditions weakly. In the subsequent sections, depending on the analytical properties of the problem, we will consider alternative choices for $\mathcal{E}(v)$.
}

\subsection{Discrete Spaces generated by Neural Networks}
We consider functions 
$u_\theta$ defined through neural networks. Notice that the  structure described  is indicative and it is presented in order of fix ideas. Our results do not depend on particular neural network architectures but only on their approximation ability. 
A deep neural network maps every point $\overline x\in \varOmega _D$ to a number $u_\theta (\x) \in \R$, through
\begin{equation}\label{C_L}
	u_\theta(\x)= C_L  \circ \sigma  \circ C_{L-1} \cdots \circ\sigma \circ C_{1} (\x) \quad \forall \x\in \varOmega _D.
\end{equation}
%
%$u\theta(x)= 𝐶_L  \circ \sigma  \circ 𝐶_{L-1} \cdots \circ\sigma 𝐶_{1}$
The process $\mathcal{C}_L:= C_L  \circ \sigma  \circ C_{L-1} \cdots \circ\sigma \circ C_{1} $
% %  
% \begin{equation}\label{C_Lsigma}
% \mathcal{C}_L:= C_L  \circ \sigma  \circ C_{L-1} \cdots \circ\sigma \circ C_{1} 
%\end{equation}
%%
is in principle a map $\mathcal{C}_L : \R ^m \to \R ^{m'} $; in our particular application, $m =d$ (elliptic case) or $m =d+1$ (parabolic case) and $m'=1.$  The map $\mathcal{C}_L $ is a neural network with $L$ layers and activation function $\sigma.$ Notice that to 
define $u_\theta(\x) $ for all $\x\in \varOmega _D$ we use the same $\mathcal{C}_L ,$ thus $u_\theta(\cdot ) =\mathcal{C}_L  (\cdot ) .$
Any such map $\mathcal{C}_L$ is characterised by the intermediate (hidden) layers $C_k$, which are affine maps of the form 
\begin{equation}\label{C_k}
	C_k y = W_k y +b_k, \qquad \text{where }  W_k \in \R ^ {d_{k+1}\times d_k}, b_k \in \R ^ {d_{k+1}}.
\end{equation} 
Here the dimensions $d_k$ may vary with each layer $k$ and $\sigma (y)$ denotes the vector with the same number of components as $y$,
where $\sigma (y)_i= \sigma(y_i)\, .$ 
The index $\theta$ represents collectively all the parameters of the network $\mathcal{C}_L,$ namely $W_k , b_k, $ $k=1, \dots, L .$ 
The set of all networks $\mathcal{C}_L$ with a given structure (fixed $L, d_k,  k=1, \dotsc, L\,$) of the form \eqref{C_L}, \eqref{C_k}
is called $\mathcal{N}.$ The total dimension (total number of degrees of freedom) of  $\mathcal {N} ,$ is $\dim{\mathcal {N}}= \sum _{k=1} ^L d_{k+1} (d_k +1) \, .$ 
We now define the space of functions 
\begin{equation}
	V _{\mathcal{N}}= \{ u_\theta : \varOmega _D \to \R ,  \ \text{where }  u_\theta (\x) = \mathcal{C}_L (\x), \ \text{for some } \mathcal{C}_L\in  \mathcal{N}\, \} \, .
\end{equation}
It is important to observe that $V _{\mathcal{N}}$ is not a linear space. 
%In fact, if 
%$u_\theta, u_{\tilde\theta}\in V _{\mathcal{N}}$, then this means that there are $\mathcal{C}_L$ and ${\tilde {\mathcal{C}}}_L$ such that 
%$u_\theta (x) =\mathcal{C}_L(x) $ and $u_{\tilde\theta} (x) = {\tilde {\mathcal{C}}}_L(x).$ This, in general, does not imply that there 
%is a ${\hat   {\mathcal{C}}}_L$ such that 
%$u_\theta (x) + u_{\tilde\theta} (x) = {\hat {\mathcal{C}}}_L(x).$ On the other hand, there is a one-to-one correspondence 
%%
%\begin{equation}
%	\theta \mapsto  u_\theta \in V _{\mathcal{N}}\, . 
%\end{equation}
%%
We denote by $\Theta = \{ \theta \, : u_\theta \in V _{\mathcal{N}}\}.$
%
%\begin{equation}
%	\Theta = \{ \theta \, : u_\theta \in V _{\mathcal{N}}\}.
%\end{equation}  
%
Clearly, $\Theta $ is a linear subspace of $ \R ^{\dim{\mathcal {N}}}.$ 

Treating boundary conditions is typically complex and requires careful consideration and   tailored approaches. To concentrate on the general methodology used in this work and avoid unnecessary technical complications, we assume that it is possible to select spaces that exactly satisfy the boundary conditions, see e.g., \cite{sukumar2022exact}.
% This assumption is feasible, for example, by adopting the approach in \cite{sukumar2022exact}, which involves using appropriate distance functions that depend only on the domain 
%$\varOmega  . $ If $\varPhi$ is such a function, see \cite[Section 5.1.1]{sukumar2022exact}, in the stationary case of $\varOmega _D  = \varOmega , $ we define 
%\begin{equation}\label {V_N0}
%	V _{\mathcal{N}, 0}= \{ u_\theta : \varOmega  \to \R ,  \ \text{where }  u_\theta (x) =  \varPhi (x)\mathcal{C}_L (x), \ \text{for some } \mathcal{C}_L\in  \mathcal{N} \,  \} \, .
%\end{equation}
The corresponding neural network space is then denoted by  $V _{\mathcal{N}, 0} \subset   H^2(\Omega) \cap H^1_0(\Omega).$ 
%for smooth enough activation function $\sigma\, .$
%
%
%To incorporate the boundary conditions we in the stationary case of $\varOmega _D  = \varOmega , $ it will be useful to introduce 
%\begin{equation}\label {V_N0}
%	V _{\mathcal{N}, 0}= \{ u_\theta : \varOmega  \to \R ,  \ \text{where }  u_\theta (x) = \mathcal{C}_L (x), \ \text{for some } \mathcal{C}_L\in  \mathcal{N}\, \text {and } u_\theta |_ {\partial  \varOmega  } =0 \,  \} \, .
%\end{equation}
In the time-dependent case  we still use the same notation for the space  
%\begin{equation}\label {V_N0td}
%\begin{split}
%	V _{\mathcal{N}, 0}= &\{ u_\theta : \varOmega  \times [0, T] \to \R ,  \ \text{where }  u_\theta (x, t) =  \varPhi (x)\mathcal{C}_L (x, t), \ \text{for some } \mathcal{C}_L\in  \mathcal{N}\, \}\\
%%	&\qquad \qquad \text {and } u_\theta (t) |_ {\partial  \varOmega  } =0, \quad t\in [0, T] \,  \} \, .
%\end{split}
%\end{equation}
%Then similarly 
$
V _{\mathcal{N}, 0} \subset H^1(0,T ; L^2 (\Omega)) \cap L^2(0,T ; H^2(\Omega) \cap H^1_0(\Omega)).$ 
 {Treating the boundary conditions weakly is of course possible, and in fact it is the approach taken in Section 3.2. In non-convex Lipschitz domains it is necessary to consider discrete spaces which do not belong in $ H^2(\Omega) \cap H^1_0(\Omega),$ see     Remark \ref{nonconvex_H^2}.  
%In addition,  it is interesting that our approach may highlight the possible imbalances  of the loss, see Remark \ref{boundary_c}. 
Furthermore, it is noteworthy that our approach may shed light on potential imbalances in the loss function, as highlighted in Remark \ref{boundary_c} and in the analysis of Section 3.2.}

\subsection{Discrete minimisation  on  $V _{\mathcal{N}}$} Physics Informed Neural networks are based on the minimisation of residual-type functionals of the form \eqref{Functional} over the discrete set $ V _{\mathcal{N}} \, :$
\begin{definition}\label{abstract_mm_nn} Assume that the problem 
\begin{equation}\label{mm_nn:abstract}
	\min  _ {v \in  V _{\mathcal{N}} } \E (v)
\end{equation}
has a solution $v^\star \in V _{\mathcal{N}} .$ We call  $ v^\star \,  $ a deep-$ V _{\mathcal{N}}$ minimiser of $\E \, .$
\end{definition}
In the analysis of   Sections 3.1 and 4 we shall assume that $ \E (v)$ is minimised on $ V _{\mathcal{N}, 0}  \, .$ A key difficulty in studying this problem lies on the fact that $ V _{\mathcal{N}}$ is not a linear space. 
Computationally, this problem can be    equivalently formulated as a minimisation problem in $\R ^{\dim{\mathcal {N}}}$ by considering $\theta$ as the parameter vector to be identified through $\min _ { \theta \in  \Theta } \E(u_\theta),$ which   however is non-convex with respect to $\theta$ even though the functional $\E (v)$ is convex with respect to $v.$  
%
%\begin{equation}\label{mm_nn:abstract_theta}
%	\min _ { \theta \in  \Theta } \E(u_\theta).
%\end{equation}
%
%Notice that although \eqref{mm_nn:abstract_theta} is well defined as a 
%discrete minimisation problem, in general, this is non-convex with respect to $\theta$ even though the functional $\E (v)$ is convex with respect to $v.$ This is the source of one of the main technical difficulties in machine learning algorithms. 
%\begin{theorem}\label{Th:energy_decay} Assume that for each $U^n$ there exist an open set $B_n$ such that 
%$U^n \in B_n$   and  problem \eqref{mm_nn:abstract} has a unique (local) solution in $B_n$. Then 
%%
%\begin{equation}
%\label{energy_decay}
%\frac 1{2k}\|U^{n+1}-U^n\|^2 + E(U^{n+1})  \leqslant E(U^{n}). 
%\end{equation}
%%	
%\end{theorem} 

\subsection{Time discrete Training} To implement such a scheme we shall need   computable discrete versions of the energy $\E(u_\theta).$ 
 This can be achieved through different ways. 
  A common way to achieve this is to use appropriate quadrature for integrals over $\varOmega_D$ ({Training through quadrature}). Just to fix ideas   such a quadrature requires 
 a set $K_h$ of discrete points $z\in K_h$  and corresponding nonnegative weights $w_z$
 such that 
 \begin{equation}
\label{quadrature}
\sum _{z\in K_h} \, w_z \, g(z) \approx \int _{\Omega_D} \, g(\x) \, \d \x .
\end{equation}
  Then one can define the discrete functional % 
\begin{equation}
\label{E_h}
\mathcal{E}_{Q, h}( g )  = \sum _{z\in K_h} \, w_z \,  
| \L v (z) - f(z) |^2\,  \, . 
\end{equation}
A similar treatment should be applied to the term corresponding to the initial condition $\int_{\Omega} |v - u^0|^2 dx$, and to the boundary condition, if imposed weakly.  Notice that both deterministic and probabilistic (Monte-Carlo, Quasi-Monte-Carlo)  quadrature rules are possible, yielding different final algorithms. Although our stability framework is designed to incorporate the training process, in this work we do not examine in detail the influence of quadrature (and, by extension, of training) on the stability and convergence of the algorithms. This requires a much more involved technical analysis and it will be the subject of future research. However, it will be instrumental for studying the   notion of stability introduced herein, to consider a hybrid algorithm where quadrature (and discretisation) is applied only to the time variable of the parabolic problem. This  approach is instrumental in the design and analysis of time-discrete methods for evolution problems, and we believe that it is quite useful in the present setting. 

To apply a quadrature in the time integral only we proceed as follows: Let $0=t^0< t^1< \cdots <t^N=T$ define a partition of $[0,T]$ and $I_n:=(t^{n-1},t^n],$ $k_n:=t^n-t^{n-1}.$
We shall denote by $v^m(\cdot)$ and $f^m(\cdot)$ the values $v (\cdot,t^m)$ and $f(\cdot ,t^m).$ Then we define 
the discrete in time quadrature by 
 \begin{equation}
\label{quadrature_time}
\sum _{n=1}^N \,  k_n \, g(t^{n}) \approx \int _{o}^T \, g(t) \, \d t.
\end{equation}
We proceed to define the time-discrete version of the functional \eqref{Functional} as follows
\begin{equation}\label{Functional_k}
\mathcal{G}_{k, IE} (v) = \sum _{n=1}^N \,  k_n \, \int_{\Omega }\big | \frac{v ^n-  v^{n-1}}{k_n} + L v ^n  - f^n \big |^2 \, \, \d x +\, 
|v  - u^0 |_{H^1 (\Omega)}^2 
\end{equation}
We shall study the stability and convergence properties of 
the minimisers of the problems:
\begin{equation}\label{ieE-minimize_k}
	\min  _ {v \in  V _{\mathcal{N}, 0} } \mathcal{G} _{k, IE}(v)\, . 
\end{equation}
%
%A solution $v^\star \in V _{\mathcal{N}}  $  is called a $V _{\mathcal{N}}$ minimiser of $ \E _ {k} \, .$
%
It will be interesting to consider a seemingly similar (from the point of view of quadrature and approximation) discrete functional:
\begin{equation}\label{Functional_k_Ex_intro}
\mathcal{G}_{k, EE} (v) = \sum _{n=1}^N \,  k_n \, \int_{\Omega }\big | \frac{v ^n-  v^{n-1}}{k_n} + L v ^{n-1}  - f^{n-1} \big |^2 \, \, \d x +\, |v  - u^0 |_{H^1 (\Omega)}^2,
\end{equation}
and compare its properties to the functional $ \mathcal{G} _{k, IE} \, , $ and the corresponding $V _{\mathcal{N}}$ minimisers.

\section{Our results}
\label{sec:G_convergence}

%In this section we discuss our main contributions. Our goal is twofold: to suggest a consistent notion of stability and a corresponding convergence framework for the methods considered. 

%establish the $\Gamma$-convergence of the discete functionals considered in the Definition \ref{abstract_mm_nn}  (which for convenience we call $ \E _h$)   to the continuum energy $ \E $. 
%The proof consists of three parts: we first prove equi-coercivity,  
%then, we show
%the $\liminf$ inequality which provides a lower bound of the discrete energies by the continuum counterpart. 
%We conclude with the $\limsup$ inequality which, as we will see in Section \ref{sec:compactness_convergence} ensures the attainment of the limit. 
%Similar steps in the context of bilayer plates have been shown in \cite{bartels2017bilayer}.  

%\theoremstyle{definition}
\subsubsection*{\it Energy Stability.} 
%Equi-Coercivity is a key notion 
%in the $\Gamma-$convergence analysis which drives compactness and the convergence of minimisers 
%of the approximate functionals. Especially, in the case of discrete functionals  (denoted below by $\mathcal{E}_\ell ,$ $\ell$ stands for a discretisation parameter) stability is a prerequisite for compactness and convergence. 
  Our analysis   is driven  by two 
key properties which are roughly stated as follows:  The energies $\mathcal{E}_\ell ,$ defined on 
a sequence of discrete spaces $(V_\ell, \|\cdot\|_{V_\ell}),$  where $\ell$ stands for a discretisation parameter, are called \emph{Energy Stable} if the following two properties are satisfied
\begin{enumerate}
	\item [{[S1]}]  If the sequence   $\mathcal{E}_\ell [v_\ell]  $ is uniformly bounded
$$\mathcal{E}_\ell [v_\ell] \leq C,
$$  
then  there exists a constant $C_1>0,$ independent of the sequence $\{v_\ell \} $ and $\ell, $ such that 
%following quantities are \textit{uniformly bounded} with respect to $h$
\begin{align}
%&  \E ^{ho}[ u_h ] < +\infty  \label{coer:hoTerms} \text{ (Maybe we don't need that.)}\\
&\|v_\ell\|_{V_\ell} \le C_1 . \label{coer:dg_seminorm}
\end{align}
\item [{[S2]}]  Uniformly bounded sequences in $\|v_\ell \| _{V_\ell}$ have convergent subsequences in $H,$
\end{enumerate}
where $H$ is a normed space (typically a Sobolev space) which depends on the form of the discrete energy considered. 
Property [S1] requires that  $ \mathcal{E}_\ell [\, \cdot \, ] 
$ is coercive with respect to (possibly $\ell$-dependent) norms (or semi-norms). 
%In the linear case considered herein can be replaced by just requiring, $\|v_\ell\|_{V_\ell}  \lesssim \mathcal{E}_\ell [v_\ell] \, .$ 
Further, [S2], implies that, although $\|\cdot \| _{V_\ell}$ are $\ell$-dependent,  they must be sufficiently strong to ensure that from any sequence uniformly bounded in these norms, one can extract a subsequence that converges in a weaker topology (induced by the space $H$). 

We argue  that these properties provide a consistent framework for stability even in the presence of training.
Although, in principle, the use of discrete norms is motivated from a nonlinear theory,   \cite{grekasPhD}, %\cite{BNNtogkas:2021}, \cite{grekas2022approximations}, 
in order to focus on ideas rather than on technical tools, we started our study in this work on simple linear problems.

 Section 3 is devoted to elliptic problems and Section 4 to parabolic. 
 In Section 3.1 and Section 3.2 we consider the same elliptic operator but posed on 
 convex and non-convex Lipschitz domains respectively. It is interesting to compare the corresponding stability results, Propositions \ref{EquicoercivityofEdelta} and \ref{Prop:EquicoercivityofE(2)}. In both cases, given that no training is assumed, coercivity in [S1] holds with norms independent of $\ell,$
 but in the second case   in a weaker norm.  {Notice that most of the results of Section 3.1 are known, but we include them for completeness.}
   The stability result of the continuous formulation (without training) of the parabolic problem is Proposition \ref{EquicoercivityofG}. 
%   Here an interesting feature appears to be  that a  maximal regularity estimate is required for the parabolic problem. 
   In the case of time-discrete training, Proposition \ref{TD:EquicoercivityofG}, [S1] holds with an $\ell-$ dependent norm. It is interesting to observe that  a discrete maximal regularity estimate is required in the proof of Proposition \ref{TD:EquicoercivityofG}. Although we do not use previous results, it is useful  to compare to \cite{kovacs2016stable}, \cite{leykekhman2017discrete}, \cite{akrivis2022maximal}.
   The last example highlights that training is a key factor in algorithmic design, since it influences not only the accuracy, but crucially, the
stability properties of the algorithm.  In fact, in Section 4.2.5 we provide evidence that functionals 
related to time discrete training of the form \eqref{Functional_k_Ex_intro}, \eqref{Functional_k_Ex}, which fail to  satisfy the stability criteria [S1] and [S2], produce approximations with unstable behaviour.  
 
 Let us mention that for simplicity in the exposition we assume {in Sections 3.1 and 4,} that the discrete energies are defined on spaces where homogenous  Dirichlet conditions are satisfied (minimisation on  $V _{\mathcal{N}, 0} $ as  defined in Section 1.3). This is done only to avoid extra technical complications. 
% It is clear that all results can be extended when these conditions are imposed weakly through the loss functional. 
 {However, in Section 3.2 we need to consider weakly enforced boundary conditions in view of the Remark \ref{nonconvex_H^2}.}  It is interesting to note, that  our analysis highlights  that the precise choice of the form of the boundary terms in the loss functional   affects how strong is the norm  in [S1], see Remark \ref{boundary_c} {and the analysis in Section 3.2.}

\subsubsection*{\it Convergence -- $\liminf - \limsup \ $ framework.}
We show convergence of the discrete minimisers to the solutions of the underlined PDE under minimal regularity assumptions. For certain cases, see Theorem \ref {Thrm:GammalimofEdelta} for example, it is possible by utilising the stability of the energies and the linearity of the problem, to show 
direct bounds for the errors and convergence. This is in particular doable in the absence of training. In the case of regularised fuctionals, or when time discrete training is considered we use   the liminf-limsup framework of De Giorgi, see Section 2.3.4 of \cite{DeGiorgi_sel_papers:2013}, and e.g., 
\cite{braides2002gamma},  used in the $\Gamma-$convergence of functionals arising in non-linear PDEs, see Theorems \ref{Thrm:GammalimofEdelta_reg}, \ref{Thrm:Gamma_reg_funct}, (regularised functionals) and Theorem \ref{Thrm:TimeD} (time-discrete training). 	These results show that stable functionals 
in the sense of [S1], [S2], yield neural network approximations converging to the  weak solutions of the PDEs, assuming only that the discrete spaces can approximate smooth functions effectively, see \eqref{w_ell_7}, \eqref{w_ell_7_hSm} and Remarks \ref{Rmk:NNapproximation},
\ref{Rmk:NNapproximation2}. 
This analytical framework combined with the stability notion introduced above provides a consistent and flexible toolbox, for analysing neural network (or more general) approximations to PDEs. It can be extended to various other, possibly nonlinear, problems. Furthermore, it provides a clear connection to PDE well posedness and  discrete stability when training is taking place. 

\subsubsection*{\it Previous works.}
Previous works on the analysis of  methods based on residual minimisation over neural network spaces for PDEs include 
   \cite{SSpiliopoulos:2018},   \cite{Karniadakis:pinn:conv:2019}, \cite{shin2020error},  \cite{hong2022priori}, \cite{Mishra_gen_err_pinn:2023}. In \cite{SSpiliopoulos:2018} convergence was established for smooth enough classical solutions of a class of nonlinear parabolic PDEs, without considering training of  the functional. Convergence results, under assumptions on the discrete minimisers or  the NN space,    when Monte-Carlo training was considered, were derived in  \cite{Karniadakis:pinn:conv:2019}, \cite{shin2020error},  \cite{hong2022priori}. In addition, in  \cite{shin2020error}, continuous stability of certain linear operators is used in the analysis.
   The results of    \cite{Mishra_gen_err_pinn:2023} were based on estimates where the bounds are dependent on the discrete minimisers and their derivatives. These bounds imply convergence only under the assumption that  these functions are uniformly bounded in appropriate Sobolev norms.
   The results in  \cite{hong2022priori}  with deterministic training, are related, in the sense that they are applicable to NN spaces where by construction high-order derivatives are uniformly bounded in appropriate norms. 
    {It is crucial to emphasise that when training and quadrature are considered in the method design and analysis, known approaches simply add the resulting (quadrature/training) error to C\' ea’s-type bounds, necessitating the uniform boundedness of the higher-order derivatives of minimisers. 
%   In contrast, our approach suggests treating the scheme (including training) as a whole and addressing its (discrete) stability, which is the typical approach in the field of numerical analysis for differential equations.}
   In contrast, our approach proposes treating the scheme (including training) as a unified entity and addressing its (discrete) stability, which aligns with the typical a priori analysis approach in the field of numerical methods for differential equations.}  
%      As mentioned, part of the analysis is based on    $\Gamma$-convergence arguments.  $\Gamma$-convergence is a very natural   %nonlinear 
%   framework which is  used in nonlinear energy minimisation. 
%   
   In  \cite{muller2020deep} $\Gamma$-convergence  was used in the analysis of  deep Ritz methods without training. In  \cite{loulakis2023new}, a similar  $\liminf - \limsup \,$ approach to this work was used in   machine learning algorithms with probabilistic training to derive convergence results for global and local discrete minimisers. 
  % Our approach was partly inspired by  \cite{grekas2022approximations}.      
% For recent applications to computational methods where the discrete energies are rather involved,  see \cite{bartels2017bilayer},  \cite{grekasPhD}, \cite{BNNtogkas:2021}, \cite{grekas2022approximations}. 
% It seems that these analytical tools coming from  nonlinear PDEs 
% provide very useful insight in the present neural network setting, while standard linear theory arguments are rarely applicable   due to the nonlinear character of the spaces $V _{\mathcal{N}}  .$ 
 Conceptually related to this paper is the recent work on 
   Variational PINNs (the residuals are evaluated in a weak-variational sense), \cite{B_Canuto_P_Vpinn_quadrature_2022}, 
   where the role of quadrature was proven crucial in the analysis of the method.  For analysis of other residual based methods we refer to
   \cite{Bochev_Gunz_LS_book}, \cite{CLSchaback_2018_LeastS_coll} and their references.

\section{Elliptic problems}

%We consider the elliptic problem \eqref{EllipticPDE} 
%\begin{equation}
%	\begin{split}
% \label{EllipticPDE}
% Lu = f, & \qquad \text{in } \Omega,\\
%  u = 0, & \qquad \text{on } \partial \Omega,
%\end{split}
%\end{equation}
%where $ u : \Omega \subset \mathbb{R}^d \rightarrow \mathbb{R} \;, \: \Omega $ is an open, bounded set with Lipschitz %$ C^2 $ 
%boundary, %$ u \in H^2(\Omega)  \cap  H^1_0(\Omega) \\ , 
%$f \in L^2 (\Omega)  $ and $ L $ the elliptic operator  as in \eqref{EllipticPDE}. 
{In this section, we investigate the convergence properties of approximations for the elliptic PDE \eqref{EllipticPDE}.  In Section 3.1, we assume that $\Omega$ is a convex Lipschitz domain, and the solution of the problem is sufficiently smooth. The more intricate case of a non-convex Lipschitz domain is addressed in Section 3.2, where we impose the boundary conditions weakly in the discrete functional. We briefly discuss the case where $f \in H^{-1}$ in Section 3.3. }

\subsection{Convex domains} 
{
For smooth enough $v$ now define the energy 
%$ \mathcal{E} : H^2(\Omega)  \cap  H^1_0(\Omega) \rightarrow \overline{\mathbb{R}} $ 
as follows
\begin{equation}\label{Functional_L}
\mathcal{E}(v) = \int_{\Omega} | L v  - f |^2 \, \d x %+ \int_{\partial \Omega} |   v   |^2 \, \d x 
\end{equation}
Define now the linear space $ {\mathcal{H}_{L, 0}} =\{ v\in H_0^1(\Omega ) : \ Lv \in L^2(\Omega)\, \} ,$ equipped   with the norm
$\| v\| _{ {\mathcal{H}_{L, 0}}  } = \{ \| v \| ^2 _{H^1 (\Omega )} + \|Lv \| ^2 _{L^2(\Omega )}\, \} ^{1/2}\, .$
We consider now the minimisation problem: 
% \eqref{EllipticPDE} can be written equivalently as
\begin{equation}\label{VariationalProblem}
\min_{u \in   {\mathcal{H}_{L, 0}}   } \mathcal{E}(u)\, .
\end{equation}
 
Then  the (unique) solution of  \eqref{VariationalProblem} is the weak solution of the PDE \eqref{EllipticPDE}. 
%The Euler-Lagrange equations for \eqref{VariationalProblem} are 
%\begin{equation}\label{VariationalProblem_2}
% \int_{\Omega} ( L u - f ) \, Lv \,  \, \d x %+ \int_{\partial \Omega}    u\,  v  \, \d x 
% =0 \qquad \text{for all } {v \in   {\mathcal{H}_{L, 0}}   } \, . 
%\end{equation}
%Let $w \in H^1_0 (\Omega)$ be given but arbitrary. Consider $\overline v $ to be the solution of $ L \overline v = w $ with zero boundary conditions. Hence 
%$ \overline v \in H^1_0 (\Omega)\, .$
%Then there holds, 
%\begin{equation}\label{VariationalProblem_3}
% \int_{\Omega} ( L u - f ) \, w \,  \, \d x %+ \int_{\partial \Omega}    u\, \overline v  \, \d x 
%% =\int_{\Omega} ( L u - f ) \, w \,  \, \d x 
% =0 \qquad \text{for all }  w \in   H^1_0 (\Omega) \, . 
%\end{equation}
% Hence, $ L u = f$ in the sense of distributions
%% . We turn now to \eqref{VariationalProblem_2} and observe that $\int_{\partial \Omega}    u\,  v  \, \d x 
%% =0$  for all   ${v \in   {\mathcal{H}_{L, 0}}   } \, .$ We conclude therefore that 
%% $ u=0$ on $\partial \Omega$ 
%% 
%and the claim is proved. 
%%Similarly, one may consider incorporating the boundary conditions on the space 
%% $ {\mathcal{H}_{L, 0}}   $  by setting $ {\mathcal{H}_{L, 0}}  =\{ v\in H_0^1(\Omega ) : \ Lv \in L^2(\Omega)\, \} .$  } 
 
 In this section we assume that if we select the networks appropriately, as we  increase their complexity  we may approximate any $w$ in $H^2(\varOmega)$ . To this end,   we select a sequence of spaces  $V _{\mathcal{N}, 0}  $ as follows: for each 
  $\ell \in \mathbb N$ we correspond a DNN space  $ V _{\mathcal{N}, 0}  ,$
  which is denoted by  $V_\ell $ with the following property: For each $w\in H_0^1(\Omega) \cap  H ^2(\Omega)$ 
  there exists a $w_\ell \in V_\ell$ such that,
    \begin{equation}\label{w_ell_7}\begin{split} 
 \|w_{\ell}-w\|_{H^2(\Omega)}  \leq \  \beta _\ell \, (w), \qquad 
 \text{and } \ \beta _\ell \, (w) \to 0,  \  \ \ell\to \infty\, .	\end{split}
\end{equation}
If in addition, $w \in H^m (\Omega )\cap H_0^1(\Omega) $ is in higher order Sobolev space
then 
  \begin{equation}\label{w_ell_7_hSm}\begin{split} 
 \|w_{\ell}-w\|_{H^2(\Omega)}  \leq \ \tilde  \beta _\ell \, \| w\| _ {H^m(\Omega)} , \qquad 
 \text{and } \ \tilde \beta _\ell \, \to 0,  \  \ \ell\to \infty\, .	\end{split} \end{equation}
We do not need specific rates for $ \tilde \beta _\ell \,   ,$
but only the fact that the right-hand side of \eqref{w_ell_7_hSm} has an explicit dependence of   Sobolev norms of $w .$  
This assumption is a reasonable one in view of the available approximation results of neural network spaces, 
see for example \cite{Xu}, \cite{Dahmen_Grohs_DeVore:specialissueDNN:2022,Schwab_DNN_constr_approx:2022, Schwab_DNN_highD_analystic:2023,  Grohs_Petersen_Review:2023}, 
and their references. 

\begin{remark}\label{Rmk:NNapproximation}
Due to higher regularity needed by the loss functional one has to use smooth enough activation functions, such as $\tanh $ or ReLU$ ^k ,$  that is, $ \sigma (y)= (\max \lbrace 0,y \rbrace )^k ,$ see e.g., \cite{Xu}, \cite{de2021approximation}.
\end{remark}

\begin{remark}\label{Rmk:NNapproximation2}
%Due to higher regularity needed by the loss functional one has to use smooth enough activation functions, such as $\tanh $ or ReLU$ ^k ,$  that is, $ \sigma (y)= (\max \lbrace 0,y \rbrace )^k ,$ see e.g., \cite{Xu}, \cite{de2021approximation}.
The above  assumptions can be relaxed by requiring that \eqref{w_ell_7} and \eqref{w_ell_7_hSm} hold specifically for $w = u$, where $u$ is the exact solution of the PDE. (In the case where $u$ does not have the required regularity, \eqref{w_ell_7} and \eqref{w_ell_7_hSm} should hold for appropriate smooth approximations of $u$). As will become evident in the proofs below, these approximation properties are required to ensure the existence of recovery sequences. For our problems, however, the existence of a recovery sequence for 
$u$ alone is sufficient to complete the proofs.

In general, the available results so far in the NN literature do not provide enough information on specific architectures required to achieve specific bounds with rates. Since the issue of the approximation properties is an important but independent problem, we have chosen to require  minimal abstract assumptions which can be used to prove convergence. In fact our only assumption for the discrete nonlinear spaces (neural based or not) $V_\ell$ are  \eqref{w_ell_7}, \eqref{w_ell_7_hSm} and their analogs in the next sections. 
\end{remark}

{Subsequently, we analyse the case where elliptic regularity bounds are satisfied and consider the sequence of energies.}

\begin{align}\label{deltaEnergies}
\mathcal{E}_{\ell}(u_{\ell}) = \begin{cases} \mathcal{E}(u_\ell) \;\;\;,\;\; u_\ell \in V_\ell =   V _{\mathcal{N}, 0}  , %\cap H^2(\Omega)  \cap  H^1_0(\Omega)
 \\ + \infty \;\;\;\;\;, \;\; \textrm{otherwise}
\end{cases}
\end{align}
where $ V_\ell  $ are chosen to satisfy \eqref{w_ell_7}.

\subsubsection{\it Stability}

%Now we have equicoercivity of $ \mathcal{E}_{\ell} $ as a corollary of the following result. 
Now we have the stability  of $ \mathcal{E}_{\ell} $ as a corollary of the following result, which follows from standard elliptic regularity bounds.

\begin{proposition}[Stability/Equi-coercivity]\label{EquicoercivityofEdelta} Assume that $\Omega$ is convex. Let $ (v_\ell) $ be a sequence of functions in $ V_\ell $ such that for a constant $ C>0 $ independent of $ \ell $, it holds that 
$\mathcal{E}_\ell (v_\ell) \leq C.$
%\begin{equation}\label{EquicoercivityofEdelta1}
%\mathcal{E}_\ell (u_\ell) \leq C.
%\end{equation} 
Then there exists a constant $ C_1>0 $ independent of the sequence $\{v_\ell \} $ and $\ell, $ such that
\begin{equation}\label{EquicoercivityofEdelta2}
|| v_\ell ||_{H^2(\Omega)} \leq C_1 \, .
\end{equation}
\end{proposition}

\begin{proof}
Since $ \mathcal{E}_\ell (u_\ell) \leq C $, from the definition of $ \mathcal{E}_\ell $, it holds that $ \mathcal{E} (u_\ell) \leq C .$
We have that
\begin{equation}\label{ProofofEquicoercEq1} \mathcal{E}(u_\ell) = \int_{\Omega} ( |  L u _\ell|^2 -2 f \:  L u_\ell  + |f|^2 )\, \d x \leq C\, .
\end{equation}
From H\" older's inequality we have,
%\begin{equation}\label{ProofofEquicoercEq2}
%\begin{gathered}
%C \geq \int_{\Omega} ( | L u |^2 + |f|^2  -2 f \: L u  ) \\ \geq || L u  ||^2_{L^2(\Omega)} + || f||^2_{L^2(\Omega)}  -2|| L u  ||_{L^2(\Omega)} || f||_{L^2(\Omega)} \\
%\Rightarrow (|| L u ||_{L^2(\Omega)} - || f||_{L^2(\Omega)})^2 \leq C
%\end{gathered}
%\end{equation}
  since $ f \in L^2(\Omega) ,$  
\begin{equation}\label{ProofofEquicoercEq4}
\| L u_\ell \|_{L^2(\Omega)} \leq C_1\, .
\end{equation}
Finally, since $ u_\ell |_{\partial \Omega} =0 $, by the global elliptic regularity in  $ H^2 $   
 (see Theorem 4, p.334 in \cite{Evans}) we have
\begin{equation}\label{ProofofEquicoercEq5}
|| u_\ell ||_{H^2(\Omega)} \leq C_2 (|| L u _\ell ||_{L^2(\Omega)} + || u_\ell ||_{L^2(\Omega)})
\end{equation}
where $ C_2 $ depends only on $ \Omega $ and the coefficients of $ L $. Now since $ 0 \notin \Sigma \;\: (\Sigma $ is the spectrum of $ L $), by Theorem 6 in \cite{Evans} (p.324), we have
\begin{align}\label{ProofofEquicoercEq6}
|| u_\ell ||_{L^2(\Omega)} \leq C_3 || L u _\ell||_{L^2(\Omega)}
\end{align}
where $ C_3 $ depends only on $ \Omega $ and the coefficients of $ L .  $ Thus by \eqref{ProofofEquicoercEq4}, \eqref{ProofofEquicoercEq5} and \eqref{ProofofEquicoercEq6} we conclude
\begin{equation}\label{ProofofEquicoercEq7}
|| u _\ell ||_{H^2(\Omega)} \leq \tilde{C}\, .
\end{equation}
 
\end{proof}

\begin{remark} [Boundary loss] \label{boundary_c} 
%As mentioned, in order to avoid the involved technical issues related to boundary conditions we have chosen to assume throughout that 
%homogenous  Dirichlet conditions are satisfied by the discrete functions. It is evident that  that our results are valid when the boundary conditions are imposed weakly through the discrete loss functional under appropriate technical modifications. 
In the case where the loss is 
\begin{equation}\label{Functional_boundary}
 \int_{\Omega} | L v  - f |^2 \d  x + \tau  \, \int_{\partial \Omega  } |  v | ^2 \, \d { S }
\end{equation}
the assumption $\mathcal{E}_\ell (u_\ell) \leq C$ provides control of the $\| v\| _{L^2 ( {\partial \Omega  })} $ which is not enough to guarantee that  elliptic regularity estimates will hold up to the boundary, see e.g., \cite{brezis2010functional}, \cite{salsa2016partial}. Compare with
the results in the case of Lipschitz domains see Section 3.2.
%%for a detailed discussion of subtle issues related to the effect of the boundary conditions on the regularity. 
%Since the choice of the loss is at our disposal during  the algorithm design, 
%it will be interesting to consider more balanced choices of  
%the boundary loss, depending on the regularity of the boundary,  {in the case of Lipschitz domains see Section 3.2}.
%%This is beyond the scope of the present work. 
%%Alternatively, one might prefer to use the framework of \cite{sukumar2022exact} to exactly satisfy the boundary conditions. As noted in this paper, there are instances where the boundary loss of \eqref{Functional_boundary} is rather weak to capture accurately the boundary behaviour of the approximations.  
%The above  observations is yet another indication that our stability framework is consistent and able to highlight possible imbalances at the algorithmic design level.
\end{remark}

\subsubsection{\it Convergence of the minimisers}
 In this subsection, we discuss the convergence properties of the discrete minimisers. Given the regularity properties of the 
 elliptic problem and in the absence of training,  it is possible to show the following convergence result which follows from the elliptic regularity and Proposition 4. 
 
 \begin{theorem}[Estimate in $H^2$]\label{Thrm:GammalimofEdelta} Let $ \mathcal{E}_{\ell} $ be the energy functionals defined  in \eqref{deltaEnergies} and   
let $ (u_\ell) , $ $u_\ell \in V_{\ell}, $ be a sequence of minimisers of $ \mathcal{E}_\ell .$ 
Then,  if $u$ is the exact solution of \eqref{EllipticPDE}, 
\begin{equation}\label {est_final}
%\begin{split}
  \|u  -u_\ell \|_{H^2(\Omega)}  \leq C\,  \inf_{\varphi   \in  V _{\ell} } \|u-\varphi \|_{H^2(\Omega)} \, ,
%\end{split}
\end{equation}
and furthermore, $u_\ell \rightarrow u,  \;\; \; \textrm{in} \;\: H^2(\Omega)\, .$
%\begin{equation}\label{conv_H2}
%u_\ell \rightarrow u,  \;\; \; \textrm{in} \;\: H^2(\Omega)\, ,
%\qquad \ell \to \infty\, .
%\;\ \  \textrm{and} \;\: \mathcal{E}(u) = \min_{v \in H^2(\Omega)  \cap  H^1_0(\Omega)} \mathcal{E}(v)\, .
%\end{equation}

\end{theorem}
\begin{proof}
 Let $u \in  H^2(\Omega)  \cap  H^1_0(\Omega)  $ be the unique solution of \eqref{EllipticPDE}. 
Consider the sequence of minimisers $(u_\ell)\, .$ Obviously, 
$$ \mathcal{E}_\ell (u_\ell) \leq \mathcal{E}_\ell (v_\ell), \qquad \text{for all } v_\ell \in V_\ell\, . $$
Then, 
\begin{equation}\label{coerc1}
 \mathcal{E}_\ell (u_\ell) = \int_\Omega | L u_\ell -f |^2 =
  \int_\Omega | L (u_\ell - u) |^2 \geq \beta \|u  -u_\ell \|^2 _{H^2(\Omega)} ,
  \end{equation}
  by Proposition \ref{EquicoercivityofEdelta}, which proves the first claim. For the second,
  let $u\in  H^2(\Omega)  \cap  H^1_0(\Omega)  $ be the unique solution of \eqref{EllipticPDE}. 
Consider the sequence of minimisers $(u_\ell)\, .$ 
Obviously, 
%$$ \mathcal{E}_\ell (u_\ell) \leq \mathcal{E}_\ell (v_\ell), \qquad \text{for all } v_\ell \in V_\ell\, . $$
%In particular,  
$$ \mathcal{E}_\ell (u_\ell) \leq \mathcal{E}_\ell (\tilde u_\ell),  $$
where $\tilde u_\ell $ is the recovery sequence corresponding to $u$ by assumption \eqref{w_ell_7}.
Then  $ \tilde u_\ell \rightarrow u $ in $ H^2(\Omega) \, $ 
and 
\begin{equation}\label{prooflimsupeq2}
\mathcal{E}_\ell (\tilde u_\ell) =|| L \tilde u_\ell  -f ||^2_{L^2(\Omega)} =   || L (\tilde u_\ell  -u) || ^2_{L^2(\Omega)} \, ,
\end{equation}
and  
the proof is complete in view of \eqref{coerc1}. 
\end{proof}

In the present  smooth setting, the above proof hinges on the fact that $\mathcal{E} (u)=0$ and on the linearity of the problem. In the case of regularised functional   
\begin{equation}\label{Functional_reg}
\mathcal{E}_{reg}(v) = \mathcal{E}(v) + \lambda \mathcal{J}  ( v )   \, , 
\end{equation}
the proof is more involved.  We need certain natural assumptions on the functional $ \mathcal{J}  ( v )$ to conclude the 
convergence. 
We shall work with convex functionals  $ \mathcal{J}  ( v )  $ that are $\mathcal H $ consistent, i.e., they satisfy the properties: 
\begin{equation}\label{consistent_reg}
	\begin{split}
		(i)&\quad  \text{$\mathcal{J}  ( v ) \geq 0,$} \\
		 (ii)&\quad  
\text{$\mathcal{J}(v) \leq \liminf_{\ell \rightarrow \infty}\mathcal{J} (v_\ell) $ for all weakly convergent sequences $v_\ell  \rightharpoonup v\in \mathcal H ,$}\\   (iii)&\quad \text{$\mathcal{J}(w) = \lim_{\ell \rightarrow \infty}\mathcal{J} (w_\ell) $ for all   convergent sequences $w_\ell  \rightarrow w\in \mathcal H ,$}
	\end{split}
\end{equation} 
where $ \mathcal H$ is an appropriate Sobolev (sub)space which will be specified in each statement.

The proof of the next theorem is very similar to the (more complicated) proof of the Theorem \ref {Thrm:Gamma_reg_funct} and it is omitted.

%
%  we utilise that standard $\liminf$-$\limsup$ framework of $ \Gamma $-convergence, to prove that the sequence of discrete minimisers $ (u_\ell) $ converges in $ H^1(\Omega) $ to a global minimiser of the continuous functional.  

\begin{theorem}[Convergence for the regularised functional]\label{Thrm:GammalimofEdelta_reg} Let $ \mathcal{E}_{reg},\; \mathcal{E}_{reg, \ell} $ be the energy functionals defined in \eqref{Functional_reg} and 
\begin{align}\label{deltaEnergies_reg}
\mathcal{E}_{reg, \ell}(u_{\ell}) = \begin{cases} \mathcal{E}_{reg}(u_\ell), \;\;\;\;\; u_\ell \in V_\ell \cap H^2(\Omega)  \cap  H^1_0(\Omega) \\ + \infty,\;\;\;\;\; \;\; \textrm{otherwise}\, .
\end{cases}
\end{align}
Assume that the convex functional $ \mathcal{J}  ( v ) $ is $H^2(\Omega)$ consistent.  
Let $ (u_\ell) , $ $u_\ell \in V_{\ell}, $ be a sequence of minimisers of $ \mathcal{E}_{reg, \ell}.,$
%, i.e.
%\begin{equation}\label{minofEdeltainVk}
%\mathcal{E}_{reg, \ell} (u_\ell) = \inf_{v_\ell \in V^\ell} \mathcal{E}_{reg, \ell}(v_\ell)\, .
%\end{equation}
%If $ \mathcal{E}_\ell (u_\ell) $ is uniformly bounded then, up to a subsequence, there exists $ u \in H^2(\Omega)  \cap  H^1_0(\Omega) $ such that
Then,  $u_\ell \rightarrow u^ {(\lambda )},  \;\; \; \textrm{in} \;\: H^1(\Omega)\, ,$
%\begin{equation}\label{ConvOfDiscrMinE_reg_1}
%u_\ell \rightarrow u^ {(\lambda )},  \;\; \; \textrm{in} \;\: H^1(\Omega)\, ,
%\qquad \ell \to \infty\, ,
%%\;\ \  \textrm{and} \;\: \mathcal{E}(u) = \min_{v \in H^2(\Omega)  \cap  H^1_0(\Omega)} \mathcal{E}(v)\, .
%\end{equation}
where $u^ {(\lambda )}$ is the exact solution of the regularised problem.
%\begin{equation}\label{ConvOfDiscrMinE_reg_2}
%\mathcal{E}_{reg} (u^ {(\lambda )}) = \min_{v \in H^2(\Omega)  \cap  H^1_0(\Omega)} \mathcal{E}_{reg} (v)\, .
%\end{equation}
%
%
%It holds that
%\begin{equation}\label{GammaLimitofEdelta}
%\Gamma- \lim_{\ell \rightarrow \infty} \mathcal{E}_\ell (u_\ell).= \mathcal{E}(u)
%\end{equation}
%with respect to the $ H^1 $-topology.  
\end{theorem}

\subsection{{Non-convex Lipschitz domains}}

In this subsection we discuss the case on non-convex Lipschitz domains, i.e., elliptic regularity bounds are no longer valid, and solutions might form singularities and do not belong in general to $H^2(\Omega).$
We will see that the stability notion discussed in [S1] and [S2] is still relevant but in a weaker topology than in the previous case. 

{  Due to the technical nature of the problem, we will require additional notation. We assume that $\Omega$ is a Lipschitz domain, \cite{Grisvard_book},
and let $\mathscr{D} (\overline \Omega ),$ $W ^m_p (\overline \Omega ),$ being the spaces consisting of elements of $\mathscr{D} (\mathbb {R} ^d  ),$ $W ^m_p (\mathbb {R} ^d),$ restricted to $\Omega\, ,$ \cite[Section 1.3.2]{Grisvard_book}. Here $W ^m_p (\mathcal {O}  ),$ denotes the standard  Sobolev space of (possibly non integer) order $m$ and 
   $H^m (\mathcal {O}  ) =  W ^m_2 (\mathcal {O}  ). $  Furthermore consider  the Sobolev space \( H^{1/2}(\partial \Omega) \) consisting of traces of functions in \( H^1(\Omega) \) on the boundary \( \partial \Omega \).   This is a Hilbert space, see \cite{H_Guide_FractionalSS} and its references,  with 
    inner product:
\[
\langle v, w \rangle_{H^{1/2}(\partial \Omega)} = \langle v, w \rangle_{L^2(\partial \Omega)} + \int_{\partial \Omega} \int_{\partial \Omega} \frac{(v(x) - v(y))(w(x) - w(y))}{|x - y|^{d}} \d S_x \d S_y.
\]
The associated norm is:
\[
\| v \|_{H^{1/2}(\partial \Omega)}^2 = \| v \|_{L^2(\partial \Omega)}^2 + \int_{\partial \Omega} \int_{\partial \Omega} \frac{|v(x) - v(y)|^2}{|x - y|^{d}} \d S_x \d S_y.
\]
}
%\subsection*{3. Completeness}
%\begin{itemize}
%    \item Since \( H^{1/2}(\partial \Omega) \) is defined using interpolation between Hilbert spaces (or equivalently via the trace operator from \( H^1(\Omega) \)), it inherits completeness from those spaces.
%    \item This makes \( H^{1/2}(\partial \Omega) \) a \textbf{complete space} under its norm.
%\end{itemize}

% The only difference will be in the proof of the equi-coercivity of the functional and in the proof of the liminf inequality. The convergence with respect to a weaker topology, can lessen the assumptions on the regularity of the boundary of the domain $ \Omega $ that is needed for the boundary regularity (see proof of Proposition \ref{Prop:EquicoercivityofE}), to be more precise, it suffices to assume that $ \partial \Omega $ is Lipschitz (instead of $ C^2 $).
\noindent
For smooth enough $v$ now define the energy 
%$ \mathcal{E} : H^2(\Omega)  \cap  H^1_0(\Omega) \rightarrow \overline{\mathbb{R}} $ 
as follows
\begin{equation}\label{Functional_L_w}
\mathcal{E}_{w, 1/2}(v) = \int_{\Omega} | L v  - f |^2 \, \d x  + \| v \|_{H^{1/2}(\partial \Omega)}^2\, . 
\end{equation}
We refer to Remark \ref{L2_boundary_loss} for the case of the weaker loss 
\begin{equation}\label{Functional_L_w}
\mathcal{E}_{w,0}(v) = \int_{\Omega} | L v  - f |^2 \, \d x  + \| v \|_{L^2(\partial \Omega)}^2\, . 
\end{equation}
Similarly to the previous section, we define the linear space $\mathcal{H}_{L} =\{ v\in H^1(\Omega ) : \ Lv \in L^2(\Omega)\, \} ,$ equipped   with the norm
$\| v\| _{ \mathcal{H}_{L}  } = \{ \| v \| ^2 _{H^1 (\Omega )} + \|Lv \| ^2 _{L^2(\Omega )}\, \} ^{1/2}\, ;$
see \cite[Section 1.5.3]{Grisvard_book}. 
We consider now the minimisation problem: 
% \eqref{EllipticPDE} can be written equivalently as
\begin{equation}\label{VariationalProblem_w}
\min_{u \in  \mathcal{H}_{L}   } \mathcal{E}_{w, 1/2}(u)\, .
\end{equation}
 
The solution of \eqref{EllipticPDE} is clearly the unique minimiser of 
\eqref{VariationalProblem_w}. %We show next that the (unique) solution of  \eqref{VariationalProblem} is the weak solution of the PDE \eqref{EllipticPDE}. 
Furthermore,  the Euler-Lagrange equations for \eqref{VariationalProblem_w} are 
\begin{equation}\label{VariationalProblem_2_w}
 \int_{\Omega} ( L u - f ) \, Lv \,  \, \d x 
 +
\langle u, v \rangle_{H^{1/2}(\partial \Omega)}  
 =0 \qquad \text{for all } {v \in  {\mathcal{H}_{L}}   } \, . 
\end{equation}
One may show that \eqref{VariationalProblem_2_w} admits a unique solution which satisfies  \eqref{EllipticPDE}.
%Let $w \in H^1_0 (\Omega)$ be given but arbitrary. Consider $\overline v $ to be the solution of $ L \overline v = w $ with zero boundary conditions. Hence 
%$ \overline v \in H^1_0 (\Omega)\, .$
%Then there holds, 
%\begin{equation}\label{VariationalProblem_3}
% \int_{\Omega} ( L u - f ) \, w \,  \, \d x %+ \int_{\partial \Omega}    u\, \overline v  \, \d x 
%% =\int_{\Omega} ( L u - f ) \, w \,  \, \d x 
% =0 \qquad \text{for all }  w \in   H^1_0 (\Omega) \, . 
%\end{equation}
% Hence, $ L u = f$ in the sense of distributions
% . We turn now to \eqref{VariationalProblem_2} and observe that $\int_{\partial \Omega}    u\,  v  \, \d x 
% =0$  for all   ${v \in   {\mathcal{H}_{L, 0}}   } \, .$ We conclude therefore that 
% $ u=0$ on $\partial \Omega$ 
% 
%and the claim is proved. 
%Similarly, one may consider incorporating the boundary conditions on the space 
% $ {\mathcal{H}_{L, 0}}   $  by setting $ {\mathcal{H}_{L, 0}}  =\{ v\in H_0^1(\Omega ) : \ Lv \in L^2(\Omega)\, \} .$  } 

{In the analysis below we shall use the  bilinear form associated to the elliptic operator $ L ,$ ignoring the boundary data    $ B : H^1 (\Omega) \times H^1 (\Omega) \rightarrow \mathbb{R} . $ In particular,
\begin{equation}\label{BilinearForm}
\begin{gathered}
B(u,v) = \int_\Omega \Big (\, \sum_{i,j=1}^d a_{ij}u_{x_i}v_{x_j}  +cuv \: \Big ) \, \d x \, .
\end{gathered}
\end{equation}
In the sequel, we shall assume that the coefficients  $ a_{ij},$ $ c $ are smooth enough and satisfy the required positivity properties for our purposes.

In this section, we shall assume the analog of \eqref{w_ell_7}, \eqref{w_ell_7_hSm} without imposing zero boundary conditions on the discrete neural network spaces. Otherwise, the assumption is entirely similar: 
% 
%
%
%assume that if we select the networks appropriately, as we  increase their complexity  we may approximate any $w$ in $H^2(\varOmega)$ . To this end,   we select a sequence of spaces  $V _{\mathcal{N}, 0}  $ as follows: 
for each 
  $\ell \in \mathbb N$ we correspond a DNN space  $ V _{\mathcal{N}}  ,$
  which is denoted by  $V_\ell $ with the following property: For each $w\in   H ^2(\Omega)$ 
  there exists a $w_\ell \in V_\ell$ such that,
    \begin{equation}\label{w_ell_7_w}\begin{split} 
 \|w_{\ell}-w\|_{H^2(\Omega)}  \leq \  \beta _\ell \, (w), \qquad 
 \text{and } \ \beta _\ell \, (w) \to 0,  \  \ \ell\to \infty\, .	\end{split}
\end{equation}
If in addition, $w \in H^m (\Omega ) $ is an element of a  in higher order Sobolev space
%then 
we have
  \begin{equation}\label{w_ell_7_hSm_w}\begin{split} 
 \|w_{\ell}-w\|_{H^2(\Omega)}  \leq \ \tilde  \beta _\ell \, \| w\| _ {H^m(\Omega)} , \qquad 
 \text{and } \ \tilde \beta _\ell \, \to 0,  \  \ \ell\to \infty\, .	\end{split} \end{equation}
%We do not need specific rates for $ \tilde \beta _\ell \,   ,$
%but only the fact that the right-hand side of \eqref{w_ell_7_hSm} has an explicit dependence of   Sobolev norms of $w .$  
%This assumption is a reasonable one in view of the available approximation results of neural network spaces, 
%see for example \cite{Xu}, \cite{Dahmen_Grohs_DeVore:specialissueDNN:2022,Schwab_DNN_constr_approx:2022, Schwab_DNN_highD_analystic:2023, Mishra:appr:rough:2022, Grohs_Petersen_Review:2023}, 
%and their references. 

The discrete loss is defined as 
\begin{align}\label{deltaEnergies_w}
\mathcal{E}_{w, \ell}(u_{\ell}) = \begin{cases} \mathcal{E}_{w, 1/2} (u_\ell)\, ,  \;\;\;\;\; u_\ell \in V_\ell =   V _{\mathcal{N}}  , %\cap H^2(\Omega)  \cap  H^1_0(\Omega)
 \\ + \infty \, ,\;\;\;\;\; \;\; \textrm{otherwise,}
\end{cases}
\end{align}
where $ V_\ell  $   satisfy \eqref{w_ell_7_w}.
We have the following stability result:

\begin{proposition}\label{Prop:EquicoercivityofE(2)} The functional $ \mathcal{E}_{w, \ell} $ defined in \eqref{deltaEnergies_w}  is stable with respect to the $ H^1 $-norm: Let $ (u_\ell) $ be a sequence of functions in $ V_\ell $ such that for a constant $ C>0 $ independent of $ \ell $, it holds that
\begin{equation}\label{EquicoercivityofEdelta1(2)}
\mathcal{E}_{w, \ell} (u_\ell) \leq C.
\end{equation} 
Then there exists a constant $ C_1>0 $ independent of the sequence $\{u_\ell \} $ and $\ell, $ such that 
\begin{equation}\label{EquicoercivityofEdelta2(2)}
\| u_\ell \|_{H^1(\Omega)} \leq C_1\, .
\end{equation}
\end{proposition}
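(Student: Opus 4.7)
The plan is to bypass the global $H^2$ elliptic regularity estimate, which fails on non-convex Lipschitz domains, and instead extract the $H^1$ bound directly through the coercivity of the bilinear form $B$ associated to $L$. This is why [S1], [S2] are naturally expected to hold in a weaker norm than in Proposition \ref{EquicoercivityofEdelta}.

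First, I would unpack the energy bound exactly as in the proof of Proposition \ref{EquicoercivityofEdelta}. The convention underlying $\mathcal{E}_\ell$ forces $u_\ell \in V_\ell \cap H^2_0(\Omega)$ (so in particular $u_\ell$ has vanishing trace on $\partial\Omega$), together with
\begin{equation*}
\| L u_\ell - f \|^2_{L^2(\Omega)} \le \mathcal{E}_\ell(u_\ell) \le C.
\end{equation*}
By the triangle inequality and $f \in L^2(\Omega)$, this yields a uniform bound $\| L u_\ell \|_{L^2(\Omega)} \le C_0$.

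Second, I would exploit the variational identity afforded by $B$. Since $u_\ell \in H^1_0(\Omega)$, integration by parts (no boundary contributions) gives
\begin{equation*}
B(u_\ell, u_\ell) \;=\; \int_{\Omega} (L u_\ell)\, u_\ell \, \d x .
\end{equation*}
Under the ellipticity condition in \eqref{EllipticOperator} and the standing positivity assumptions on the coefficients $a_{ij}, c$, Poincaré's inequality on $H^1_0(\Omega)$ implies that $B$ is coercive: there exists $\alpha > 0$, independent of $\ell$, with $B(v,v) \ge \alpha \| v \|^2_{H^1(\Omega)}$ for all $v \in H^1_0(\Omega)$. Combining this with Cauchy--Schwarz and Poincaré,
\begin{equation*}
\alpha \| u_\ell \|^2_{H^1(\Omega)} \;\le\; B(u_\ell,u_\ell) \;\le\; \| L u_\ell \|_{L^2(\Omega)} \, \| u_\ell \|_{L^2(\Omega)} \;\le\; C_P \, C_0 \, \| u_\ell \|_{H^1(\Omega)},
\end{equation*}
from which $\| u_\ell \|_{H^1(\Omega)} \le C_1$ follows by division.

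The main delicate point is the reliance on the homogeneous trace: the identity $B(v,v) = \int_\Omega L v \cdot v\, \d x$ without boundary correction requires $v \in H^1_0(\Omega)$, which is ensured here by the hard $H^2_0$ constraint built into $\mathcal{E}_\ell$. If instead the Dirichlet condition were enforced weakly through a boundary loss term as in Remark \ref{boundary_c}, then only $\| u_\ell \|_{L^2(\partial\Omega)}$ would be controlled and the trace would enter $B(u_\ell, u_\ell)$ through surface integrals arising from integration by parts, which would necessitate either a lift argument or a strengthened boundary loss in order to absorb those terms and close the same inequality.
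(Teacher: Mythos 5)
Your proof is correct and follows essentially the same route as the paper: both extract $\|Lu_\ell\|_{L^2(\Omega)}\le C_0$ from the energy bound, use the coercivity $B(v,v)\gtrsim \|v\|^2_{H^1(\Omega)}$ on $H^1_0(\Omega)$ together with the identity $B(u_\ell,u_\ell)=\int_\Omega (Lu_\ell)\,u_\ell\,\d x$, and close via Cauchy--Schwarz and Poincar\'e. Your closing remark on the weakly imposed boundary condition matches the discussion in Remark \ref{boundary_c}.
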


\begin{proof}
{Assume that $\mathcal{E}_{w, \ell} (u_\ell) \leq C.$ Then $ \| L u_\ell   \|_{L^2( \Omega)}     + \| u_\ell \|_{H^{1/2}(\partial \Omega)} \leq C_1\, ,$
%\begin{equation}\label{stab_w_1}
%  \| L u_\ell   \|_{L^2( \Omega)}     + \| u_\ell \|_{H^{1/2}(\partial \Omega)} \leq C_1\, ,
%\end{equation}
are uniformly bounded. Standard regularity results for the elliptic problem and the trace theorem, \cite[Chapter 1] {Grisvard_book}, imply 
\begin{equation}\label{stab_w_1}
 \|u_\ell ||_{H^1(\Omega)}  \leq C_{reg} \left [ \| L u_\ell   \|_{L^2( \Omega)}     + \| u_\ell \|_{H^{1/2} (\partial \Omega)} \right ]\leq C_{reg}  C_1\, ,
\end{equation}
see e.g., \cite [Theorem 3]  {Savare_Elliptic} for a more general precise statement. The proof is then complete.
 }
%
%\red{We show that, if $ \mathcal{E}_\ell(v_\ell) \leq C $ for some $ C >0 $, then $ \|v_\ell ||_{H^1(\Omega)} \leq \tilde{C} $ for some $ \tilde{C}>0 . $ Indeed the positivity properties of the coefficients imply, for any $v\in H^1 _0 (\Omega),$
%\begin{equation} \begin{gathered}
%\theta || \nabla v||_{L^2(\Omega)}^2 \leq B(v,v) \, . 
%\end{gathered}
%\end{equation}
%Also, since $Lv _\ell \in L^2 (\Omega) \, , $
%\begin{equation}\label{EquicoercivityofE(2)Eq2}
%B(v_\ell ,v_\ell) = \int_{\Omega} v_\ell Lv _\ell\, \d x \leq || v _\ell||_{L^2(\Omega)} || Lv_\ell||_{L^2(\Omega)}\, , 
%\end{equation}
%and the claim follows by applying  H\" older and Poincar\' e  inequalities given that $\mathcal{E}_\ell (u_\ell) \leq C.$  }
\end{proof}

The convergence proof below relies on a crucial $\limsup$ inequality which is proved in the next Theorem  \ref{Thrm:Gamma_reg_funct}. 

\begin{theorem}[Convergence  in $H^1$]\label{Thrm:GammalimofEdelta_H1} Let $ \mathcal{E}_{w, \ell} $ be the energy functionals defined  in \eqref{deltaEnergies_w} and   
let $ (u_\ell) , $ $u_\ell \in V_{\ell}, $ be a sequence of minimisers of $ \mathcal{E}_{w, \ell}$, where $\Omega$ is a possibly non-convex Lipschitz domain. 
Then,  if $u$ is the exact solution of \eqref{EllipticPDE}, 
\begin{equation}\label{conv_H1}
u_\ell \rightarrow u,  \;\; \; \textrm{in} \;\: H^1(\Omega)\, ,
\qquad \ell \to \infty\, .
%\;\ \  \textrm{and} \;\: \mathcal{E}(u) = \min_{v \in H^2(\Omega)  \cap  H^1_0(\Omega)} \mathcal{E}(v)\, .
\end{equation}

\end{theorem}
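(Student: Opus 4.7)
The plan is to follow a $\liminf$–$\limsup$ argument in the spirit of $\Gamma$-convergence, taking the recovery sequence from Theorem~\ref{Thrm:Gamma_reg_funct} as a black box and combining it with the equi-coercivity of Proposition~\ref{Prop:EquicoercivityofE(2)} and the variational characterisation of the weak solution. The extra subtlety over Theorem~\ref{Thrm:GammalimofEdelta} is that $u$ is only guaranteed to lie in $H^1_0(\Omega)$, so one cannot use a simple $H^2$-approximation of $u$ as a competitor; this is precisely the difficulty handled by Theorem~\ref{Thrm:Gamma_reg_funct}.

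First, I would invoke the limsup construction of Theorem~\ref{Thrm:Gamma_reg_funct} to obtain a sequence $\tilde u_\ell \in V_\ell \cap H^2_0(\Omega)$ with $\mathcal{E}_\ell(\tilde u_\ell) \to 0$. Minimality of $u_\ell$ on $V_\ell \cap H^2_0(\Omega)$ then yields $\mathcal{E}_\ell(u_\ell) \leq \mathcal{E}_\ell(\tilde u_\ell) \to 0$; in particular $\{\mathcal{E}_\ell(u_\ell)\}$ is uniformly bounded, and Proposition~\ref{Prop:EquicoercivityofE(2)} gives $\|u_\ell\|_{H^1(\Omega)} \leq C$. Passing to a subsequence, I may assume $u_\ell \rightharpoonup u^\ast$ weakly in $H^1_0(\Omega)$ and, by Rellich--Kondrachov, $u_\ell \to u^\ast$ strongly in $L^2(\Omega)$.

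Next I would identify $u^\ast$ with the weak solution. Because each $u_\ell \in H^2_0(\Omega)$, integration by parts gives $B(u_\ell,\varphi) = \int_\Omega (Lu_\ell)\varphi\,\mathrm{d}x$ for every $\varphi \in H^1_0(\Omega)$. Cauchy--Schwarz together with $\|Lu_\ell - f\|_{L^2(\Omega)} \leq \sqrt{\mathcal{E}_\ell(u_\ell)} \to 0$ gives
\[
\Bigl| B(u_\ell,\varphi) - \int_\Omega f \varphi\,\mathrm{d}x \Bigr| \leq \|Lu_\ell - f\|_{L^2(\Omega)}\,\|\varphi\|_{L^2(\Omega)} \to 0,
\]
while continuity of $B(\cdot,\varphi)$ on $H^1(\Omega)$ implies $B(u_\ell,\varphi) \to B(u^\ast,\varphi)$. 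Hence $B(u^\ast,\varphi) = \int_\Omega f\varphi\,\mathrm{d}x$ for every $\varphi \in H^1_0(\Omega)$, so by uniqueness of the weak solution $u^\ast = u$.

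Finally, to upgrade weak to strong $H^1$ convergence, I would use the coercivity estimate $\theta\,\|\nabla(u_\ell-u)\|_{L^2(\Omega)}^2 \leq B(u_\ell - u, u_\ell - u)$ and show the right-hand side vanishes. Expanding,
\[
B(u_\ell - u, u_\ell - u) = B(u_\ell,u_\ell) - 2B(u_\ell,u) + B(u,u),
\]
and since $Lu_\ell \to f$ in $L^2(\Omega)$ and $u_\ell \to u$ in $L^2(\Omega)$, both $\int_\Omega Lu_\ell \cdot u_\ell\,\mathrm{d}x$ and $\int_\Omega Lu_\ell \cdot u\,\mathrm{d}x$ converge to $\int_\Omega f u\,\mathrm{d}x = B(u,u)$ (the last equality is the weak formulation tested with $\varphi = u$). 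Thus $B(u_\ell - u, u_\ell - u) \to 0$, which gives strong $H^1$-convergence along the subsequence; uniqueness of the limit then yields convergence of the whole sequence. The main obstacle is of course the first step: producing $H^2_0$-competitors in $V_\ell$ whose residuals $Lv - f$ vanish in $L^2$ despite $u \notin H^2(\Omega)$ on a non-convex domain—precisely the content of the forthcoming Theorem~\ref{Thrm:Gamma_reg_funct}.
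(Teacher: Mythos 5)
Your argument is correct, and it rests on the same two pillars as the paper's proof: the recovery sequence from Theorem~\ref{Thrm:Gamma_reg_funct} forcing $\mathcal{E}_\ell(u_\ell)\le\mathcal{E}_\ell(\tilde u_\ell)\to\mathcal{E}(u)=0$, and the coercivity of the bilinear form $B$ to convert residual decay into $H^1$ control. Where you differ is in the second half: you take the soft route (uniform $H^1$ bound, weak compactness, identification of the weak limit via the variational formulation, then an expansion of $B(u_\ell-u,u_\ell-u)$ to upgrade to strong convergence, and finally a uniqueness-of-limit argument to pass from the subsequence to the whole sequence). The paper instead observes that the computation in the proof of Proposition~\ref{Prop:EquicoercivityofE(2)}, applied directly to $v=u_\ell-u\in H^1_0(\Omega)$ with $Lv=Lu_\ell-f\in L^2(\Omega)$, gives the quantitative bound $c_0\|u_\ell-u\|^2_{H^1(\Omega)}\le\int_\Omega|L(u_\ell-u)|^2=\mathcal{E}_\ell(u_\ell)$, which makes the compactness, limit identification, and subsequence extraction unnecessary and in addition yields an explicit error estimate in terms of the energy of the recovery sequence. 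Your detour is harmless --- each step (the integration by parts $B(u_\ell,\varphi)=\int_\Omega(Lu_\ell)\varphi\,\mathrm{d}x$ for $u_\ell\in H^2_0(\Omega)$, the strong $L^2\times L^2$ convergence of the products, and the identity $\int_\Omega fu\,\mathrm{d}x=B(u,u)$) is justified --- but you could have shortened it considerably by testing the coercivity inequality with $u_\ell-u$ from the outset.
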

\begin{proof}
 {Let  $ u \in \mathcal {H} _L  $   be the unique solution of \eqref{EllipticPDE}. 
Consider the sequence of minimisers $(u_\ell)\, .$ Obviously, 
$$ \mathcal{E}_{w, \ell}  (u_\ell) \leq \mathcal{E}_{w, \ell}  (v_\ell), \qquad \text{for all } v_\ell \in V_\ell\, . $$
By the proof of Proposition \ref{Prop:EquicoercivityofE(2)}, we have, for $c_0>0,$
\begin{equation}\label{coerc1_1}\begin{split}
 \mathcal{E}_{w, \ell} (u_\ell) = & \int_\Omega | L u_\ell -f |^2  + \| u_\ell \|_{H^{1/2} (\partial \Omega)} ^2 \\
 = &
  \int_\Omega | L (u_\ell - u) |^2 +\|(u_\ell - u) \|_{H^{1/2} (\partial \Omega)} ^2 \geq c_0\|u  -u_\ell \|^2 _{H^1(\Omega)} \, .
 	\end{split} \end{equation}
  Furthermore, let $\tilde u_\ell $ be  the recovery sequence corresponding to $u$  constructed in the proof of Theorem \ref{Thrm:Gamma_reg_funct}. Since    
$ \mathcal{E}_{w, \ell} (u_\ell) \leq \mathcal{E}_{w, \ell} (\tilde u_\ell),  $
and 
$ \lim _{\ell\to \infty}\mathcal{E}_{w, \ell} (\tilde u_\ell) =
\mathcal{E} _{w, 1/2} (  u )=0,  $
the proof follows.} \end{proof}

Next, % we utilise the standard $\liminf$-$\limsup$ framework of $ \Gamma $-convergence, 
we prove that the sequence of discrete minimisers $ (u_\ell) $ of the regularised functionals converges   to a global minimiser of the continuous regularised  functional.

\begin{theorem}[Convergence of the regularised functionals ]\label{Thrm:Gamma_reg_funct} Let $ \mathcal{E}_{w, reg},\; \mathcal{E}_{w, reg, \ell} $ be the energy functionals defined by
\begin{align}\label{deltaEnergies_reg_w}
\mathcal{E}_{w, reg}(v) = \mathcal{E}_{w, 1/2} (v) + \lambda \mathcal{J}  ( v )   \, ,  \qquad \mathcal{E}_{w, reg,  \ell}(u_{\ell}) = \begin{cases} \mathcal{E}_{w, reg}(u_\ell), \;\;\;\;\; u_\ell \in V_\ell \, ,  \\ + \infty,\;\;\;\;\; \;\; \textrm{otherwise}\, , 
\end{cases}
\end{align}
	% \eqref{Functional_reg} and 
%\eqref{deltaEnergies_reg}
 respectively, where $\Omega$ is a possibly non-convex Lipschitz domain. Assume that the convex functional $ \mathcal{J}  ( v ) $ is $\mathcal{H}_L$ consistent.  
Let $ (u_\ell) , $ $u_\ell \in V_{\ell}, $ be a sequence of minimisers of $ \mathcal{E}_{w, reg,\ell}. $ 
Then,  
\begin{equation}\label{ConvOfDiscrMinE}
u_\ell \rightarrow u^ {(\lambda )},  \;\; \; \textrm{in} \;\: L^2 (\Omega), 
\quad 
  u_\ell \rightharpoonup u^ {(\lambda )}  \, ,  \;\; \; \textrm{in} \;\: H^1 (\Omega),
\qquad \ell \to \infty\, .
\end{equation}
where $u^ {(\lambda )}$ is the exact solution of the regularised problem
\begin{equation}\label{ConvOfDiscrMinE_reg_2}
\mathcal{E}_{w, reg} (u^ {(\lambda )}) = \min_{v \in \mathcal{H}_L(\Omega)} \mathcal{E}_{w, reg} (v)\, .
\end{equation}
\end{theorem}
\begin{proof}
We start with a $\liminf$ inequality: We assume there is a sequence, still denoted by $ v_\ell  $, such that $ \mathcal{E}_{w, reg,  \ell} (v_\ell ) \leq C $ uniformly in $ \ell $, otherwise  $ \mathcal{E}_{w, reg }(v) \leq \liminf_{\ell \rightarrow \infty}\mathcal{E}_{w, reg,  \ell}  (v_\ell ) = + \infty . $
The above stability result, Proposition \ref{Prop:EquicoercivityofE(2)},    implies that $ \| v_\ell  \|_{H^1(\Omega)}, \|   v_\ell    \|_{H^{1/2}(\partial \Omega)} $ are uniformly bounded.  Therefore, up to subsequences,  there exists a $v\in H^1(\Omega), $ such that $v_\ell  \rightharpoonup v $ in $ H^1 $ and $ v_\ell  \rightarrow v $ in $ L^2 ;$
%, thus $ v_\ell  \rightharpoonup u $ in $ H^1 ;$ 
furthermore, $v_\ell  \rightharpoonup v $ in $ H^{1/2} (\partial \Omega) $ and $ v_\ell  \rightarrow v $ in $ L^2 (\partial \Omega).$
Thus, 
\begin{equation}\label{lowersemicontinuity_boundary}
\| v \|_{H^{1/2}(\partial \Omega)} ^2   \leq \liminf_{\ell \rightarrow \infty} \| v_\ell  \|_{H^{1/2}(\partial \Omega)}^2\, .  
\end{equation}
Also, from the energy bound we have that $ || Lv_\ell  ||_{L^2(\Omega)} \leq C $ and therefore $ Lv_\ell  \rightharpoonup w $. Next we shall   show that $ w = Lv . $
Indeed, we have
\begin{equation}\label{LiminfIneq(2)Eq1}
\begin{gathered}
\lim_{\ell \rightarrow \infty} \int_{\Omega} Lv_\ell  \phi \, \d x = \int_{\Omega} w \phi \, \d x \;\;\;,\; \forall \; \phi \in C^\infty _0(\Omega) \, ,
\end{gathered}
\end{equation}
and  since $v_\ell  \in V_\ell\, , $
\begin{equation}\label{LiminfIneq(2)Eq2}
\begin{gathered}
\lim_{\ell \rightarrow \infty} \int_{\Omega} Lv_\ell  \phi \, \d x = \lim_{\ell \rightarrow \infty} B(v_\ell , \phi) = B(v, \phi) , \;\;\;\; \textrm{since} \;\: v_\ell  \rightharpoonup \: v \;\: \textrm{in} \;\: H^1(\Omega) \, ,
\end{gathered}
\end{equation}
hence, 
\begin{equation}\label{LiminfIneq(2)Eq3}
\begin{gathered}
B(v, \phi) = \int_{\Omega} w\phi \, \, \d x ,
\end{gathered}
\end{equation}
for all test functions  $\phi \in C^\infty _0(\Omega) .$ That is, $ Lv =w $ in the sense of distributions.
 The convexity of $ \int_\Omega | L v_\ell   -f |^2 $ implies weak lower semicontinuity, that is
\begin{equation}\label{lowersemicontinuity}
\int_\Omega | L v -f |^2 \leq \liminf_{\ell \rightarrow \infty}  \int_\Omega | L v_\ell   -f |^2\, .
\end{equation}
Therefore, in view of \eqref{lowersemicontinuity_boundary}, and since $ \mathcal{J}(v) $ is $\mathcal{H}_L$ consistent, (ii) of \eqref {consistent_reg}
implies that $ \mathcal{E}_{reg} (v) \leq \liminf_{\ell \rightarrow \infty} \mathcal{E}_{reg ,\ell} (v_\ell) $ for each such sequence  $ (v_\ell ).$

Let $ w \in \mathcal {H} _L  $ be arbitrary;  we will show the existence of a recovery sequence  $(w_\ell)$, such that  $ \mathcal{E}_{w, reg}(w) = \lim_{\ell \rightarrow \infty} \mathcal{E}_{w, reg,  \ell} (w_\ell)  .$ We establish first its existence for  $\mathcal{E}_{w,    \ell} $ and $\mathcal{E}_{w, 1/2}.  $ To this end, 
 for each $\delta >0$ we can select a smooth function $w_\delta  \in  \mathscr{D} (\overline \Omega ),$ such that   
 \begin{equation}\label{recov_bound}
\begin{split}
	\|&w -w_\delta\|_{H^1( \Omega ) } + \|Lw -Lw_\delta\|_{L^2( \Omega ) }\lesssim \delta \, , \quad \text{and,}\\
 |& w_\delta   |_{  H^s(\Omega)} \lesssim \frac{1}{\delta^s} \, .%|w|_{  H^1(\Omega)} .
	\end{split}
\end{equation}
Indeed, \cite[Lemma 1.5.3.9] {Grisvard_book}, implies that $\mathscr{D} (\overline \Omega )$ is dense in $ \mathcal {H} _L . $ Therefore there exists $\hat w_\delta \in \mathscr{D} (\overline \Omega )$ $\delta-$close to $w$ in the $ \mathcal {H} _L $ norm. We can then select $  w_\delta \in \mathscr{D} (\overline \Omega )$ being the the convolution of $\hat w_\delta$ with a smooth kernel such that  \eqref{recov_bound} holds.
  For $w_\delta,$ \eqref{w_ell_7_hSm_w}, there exists $ w_{\ell, \delta} \in V_\ell $ such that
$$\|w_{\ell, \delta }-w_\delta \|_{H^2(\Omega)}  \leq \ \tilde  \beta _\ell \, \| w_\delta\| _{  H^s(\Omega)}
\lesssim   \tilde  \beta _\ell \frac 1 {\delta ^s}\, %\| w \| _{  H^ 1(\Omega)}
,\qquad 
 \text{and } \ \tilde \beta _\ell \, (w) \to 0,  \  \ \ell\to \infty\, .$$
Choosing $\delta $ appropriately as function of $\tilde \beta _\ell$ we can ensure that $w_\ell =w_{\ell, \delta}$ 
satisfies, 
\begin{equation}\label{prooflimsupeq2}\begin{split}
&\| L w_\ell  -f \|_{L^2(\Omega)} \rightarrow \|Lw  -f \|_{L^2(\Omega)}\, ,\\
&\|  w_\ell  \|_{H^{1/2}(\partial \Omega)} \rightarrow \|  w   \|_{H^{1/2}(\partial \Omega)} \, ,
\end{split}
\end{equation}
where to show the last convergence we use  standard trace inequalities.  Therefore,  $ \mathcal{E}_{w, 1/2}(w) = \lim_{\ell \rightarrow \infty} \mathcal{E}_{w,  \ell} (w_\ell)  .$
Furthermore, since $ \mathcal{J}(v) $ is $\mathcal{H}_L$ consistent, (iii) of \eqref {consistent_reg}
implies that $ \mathcal{J} (w_\ell) \rightarrow   \mathcal{J}_{} (w) $ and hence
\begin{equation}\label{prooflimsupeq3}
\mathcal{E}_{reg ,\ell} (w_\ell) \rightarrow \mathcal{E}_{reg}(w).
\end{equation}
%The rest of the proof is identical to the previous results. 
Next, let $u^ {(\lambda )}\in \mathcal {H} _L  $ be the unique solution of \eqref{ConvOfDiscrMinE_reg_2}
and consider  the sequence of the discrete minimisers $(u_\ell)\, .$ Clearly, 
$$ \mathcal{E}_{reg ,\ell} (u_\ell) \leq \mathcal{E}_{reg ,\ell} (v_\ell), \qquad \text{for all } v_\ell \in V_\ell\, . $$
In particular,  
 $ \mathcal{E}_{reg ,\ell}(u_\ell) \leq \mathcal{E}_{reg ,\ell} (\tilde u_\ell),  $ 
where $\tilde u_\ell $ is the recovery sequence constructed above corresponding to $w=u^ {(\lambda )}.$  
Thus   the discrete energies are uniformly  bounded. Then the stability result  
Proposition \ref{Prop:EquicoercivityofE(2)}, implies that
\begin{align}
 \norm{ u_\ell }_{H^1(\Omega)}   < C,  
\end{align} 
uniformly.  By the  Rellich-Kondrachov theorem, \cite{Evans},  and the $\liminf$ argument above, there exists $ \tilde   u\in \mathcal {H} _L $ 
such that $ u_\ell  \rightarrow   \tilde u$ in 
$L^2(\Omega) $ up to a subsequence not re-labeled here.
Next we show that   $ \tilde   u$ is a global minimiser of $ \E _{reg }. $ We combine  the $\liminf$ and $\limsup$
inequalities as follows: 
Let $w \in \mathcal {H} _L $, and   $ w_\ell \in V_\ell $ be its recovery sequence such that %$|| L w_\ell  -f ||_{L^2(\Omega)} \rightarrow ||Lw  -f ||_{L^2(\Omega)}\, .$ 
\eqref{prooflimsupeq3} holds.
Therefore,    the $\liminf$ inequality 
and the fact that $ u_\ell $ are   minimisers of the $\E _{reg ,\ell},$ imply that
\begin{align}
 \E _{reg } (  \tilde   u ) \le  \liminf_{\ell \rightarrow \infty }  \E _{reg ,\ell} ( u_\ell ) 
  \le  \limsup_{\ell \rightarrow \infty }  \E _{reg ,\ell} ( u_\ell ) 
\le  \limsup_{\ell \rightarrow \infty }  \E _{reg ,\ell} ( w_\ell ) 
= \E _{reg }(  w ), 
\end{align}
for all $ w \in  \mathcal {H} _L $. Therefore $ \tilde  u$ is a  minimiser of $ \E ,$ and since $u^ {(\lambda )}$ is the unique global minimiser of $ \E _{reg }$ on $\mathcal {H} _L $ we have that $\tilde  u=u^ {(\lambda )}$.

\end{proof}

\begin{remark}[Loss with $L^2$ weak boundary terms] \label{L2_boundary_loss}
A common choice in applications is to consider the loss with a weaker boundary term
\begin{equation}\label{Functional_L_w}
\mathcal{E}_{w,0}(v) = \int_{\Omega} | L v  - f |^2 \, \d x  + \| v \|_{L^2(\partial \Omega)}^2\, . 
\end{equation}
Considering the term $ \| v \|_{L^2(\partial \Omega)}^2$ is computationally convenient, but leads to a functional with weaker stability properties. However, it is  still possible to prove convergence of discrete minimisers in $L^2.$ To prove this we need a coercivity bound for 
$\mathcal{E}_{w,0}(v).$ In the smooth case such bounds are used in, e.g., \cite{zeinhofer2024unifiedframeworkerroranalysis}. In the case of Lipschitz domains and $L=-\Delta$ the a priori estimates proved by Jerison and Kenig, see  \cite{JerisonKenig_1995} and its references, can be used. To this end consider  
the discrete loss   
\begin{align}\label{deltaEnergies_w_0}
\mathcal{E}_{w, \ell}(u_{\ell}) = \begin{cases} \mathcal{E}_{w, 0} (u_\ell)\, ,  \;\;\;\;\; u_\ell \in V_\ell =   V _{\mathcal{N}}  , %\cap H^2(\Omega)  \cap  H^1_0(\Omega)
 \\ + \infty \, ,\;\;\;\;\; \;\; \textrm{otherwise,}
\end{cases}
\end{align}
where $ V_\ell  $   satisfy \eqref{w_ell_7_w}.
{
We assume   the sequence, still denoted by $ u_\ell $, satisfies   $ \mathcal{E}_{w,    \ell} (u_\ell) \leq C $ uniformly in $ \ell .$ 
As in \cite{JerisonKenig_1995} we split the problem as follows:  Consider   the functions $\tilde u _\ell$ defined as weak solutions of 
\begin{equation} 
	\begin{split}
 \label{tildeu_ell}
 L\tilde u _\ell= 0, & \qquad \text{in } \Omega,\\
  \tilde u _\ell =  u _\ell, & \qquad \text{on } \partial \Omega.
\end{split}
\end{equation}
Then $ \zeta _\ell =   u _\ell- \tilde u _\ell $ satisfies, 
\begin{equation} 
	\begin{split}
 \label{tildeu_ell}
 L \zeta _\ell = L u _\ell, & \qquad \text{in } \Omega,\\
  \zeta _\ell  =  0, & \qquad \text{on } \partial \Omega.
\end{split}
\end{equation}
Since $  u _\ell= \zeta _\ell +  \tilde u _\ell  , $ it suffices to control separately $ \zeta _\ell $ and $ \tilde u _\ell \, .$
The  stability result \eqref{stab_w_1} in Proposition \ref{Prop:EquicoercivityofE(2)}    implies that $ \| \zeta _\ell  \|_{H^1(\Omega)} $ are uniformly bounded.  
On the other hand, $\| \tilde u _\ell \| _{H^{1/2}(\Omega)} $ are uniformly bounded, see \cite{JerisonKenig_1995}[p. 165], since 
$\| u _\ell \|_{L^2(\partial \Omega)}$ are uniformly bounded. Using \cite[Theorem 1.4.3.2]{Grisvard_book} we have that $\|  u _\ell \| _{H^{1/2}(\Omega)} $ are uniformly bounded and the sequence $\{ u _\ell\} $ has a convergent subsequence in $ L^{2}(\Omega) .$ By appropriately modifying the proof of Theorem \ref{Thrm:GammalimofEdelta_H1} and utilising the above a priori bounds we can prove convergence of discrete minimisers in 
$H^{1/2}(\Omega)$ and $ L^{2}(\Omega) .$}
\end{remark}

\begin{remark}[Loss with zero boundary terms] \label{nonconvex_H^2}
A very interesting recent result by Tran, \cite{Tran2024}, suggests that the loss used in Section 3.1, see \eqref{deltaEnergies}, cannot be used to approximate singular solutions in non-convex Lipschitz domains. A modification of the argument used in \cite[Proposition 2.8]{Tran2024}
shows that any approximating sequence $u_\ell \in H^2 (\Omega ) \cap H^1_0 (\Omega ),$ satisfying $ \mathcal{E}_{ \ell} (u_\ell) \leq C $ uniformly in $ \ell ,$ cannot converge to a  $u $ which is not in $  H^2 (\Omega )\, .$ This is a result of the a priori bound \cite{Grisvard_book}[Theorem 4.3.1.4] valid for $  H^2 (\Omega )\,  $ functions and of the uniform bound in $H^1  (\Omega )\, $ 
(as  a result of $\mathcal{E}_{ \ell} (u_\ell) \leq C).$ This observation suggests that
\begin{equation}\label{VariationalProblem_Lavr_New}
\inf_{u \in   {\mathcal{H}_{L, 0}}   } \mathcal{E}(u) < \inf_{u \in   H^2 (\Omega ) \cap H^1_0 (\Omega )    } \mathcal{E}(u)\, ,
\end{equation}
which is an interesting analog of the Lavrientev gap phenomenon; for  its numerical analysis implications see,  e.g., \cite{Ball_Knowles_1987, Ball_computation_2001, Cc_review_2001}.
\end{remark}

\subsection{Forcing terms in $H^{-1}$}

Consider the problem \eqref{EllipticPDE} where the forcing term is in  $H^{-1}(\Omega).$
A conceptual limitation of residual-based methods, such as PINNs, is that the loss function is not well-defined in this setting, even though \eqref{EllipticPDE} possesses a natural variational formulation  for $f \in H^{-1}(\Omega).$ In contrast, methods like finite elements, which are based on variational formulations, can be formulated and analysed. However, a subtle implementation challenge arises: computing the action of 
$f$ on the basis functions is complex, as discussed in \cite{Kreuzer:2021aa} and similar works. As a result, approximation methods are generally required to evaluate this action. 
A natural approach  in the present setting  is to 
consider a sequence $f_\varepsilon    \in L^2 (\Omega),$ such that 
\begin{equation}
	\lim  _{\varepsilon \to 0 } f_\varepsilon  = f\, , \qquad 
	\text{in } \  H^{-1}(\Omega).\end{equation}
Thus, if $u^\varepsilon $ is the solution of \eqref{EllipticPDE} with forcing $f_\varepsilon$
we have  
\begin{equation}
	\| u - u^\varepsilon  \| _{H^{1}(\Omega)} \leq C \|  f - f_\varepsilon \|  _{H^{-1}(\Omega)} \, ,  
	\end{equation}
i.e., the distance $\| u - u^\varepsilon  \| _{H^{1}(\Omega)}$ can be done arbitrarily small. Thus the analysis of the previous sections 
implies that the minimisers of the problems
\begin{equation}\label{mm_nn:abstract}
	\min  _ {v \in  V _{\mathcal{N}, 0} } \int_{\Omega} | L v  - f_\varepsilon |^2 \, \d x 
\end{equation}
  are capable of producing accurate approximations of 
$u.$ Conceptually, this approach resembles common approximation techniques in nonlinear PDEs (such as artificial viscosity in conservation laws), where higher-gradient models with smooth solutions are employed to approximate singular solutions, guiding the design of effective numerical methods.  Residual conforming least squares methods in 
$L^2$ 
  are known to be ineffective in the finite element setting when we approximate simple linear elliptic problems. However, the minimisation problem over the neural network spaces 
$ V _{\mathcal{N}} $	
  exhibits fundamentally different characteristics. While our analysis offers valuable insights into the behaviour of these methods, further investigation is needed, including the study of more complex PDEs, other training methods,  and detailed comparisons with alternative approaches.

\section{Parabolic problems}

Let as before $ \Omega \subset \mathbb{R}^d $, open, bounded and set $ \Omega_T = \Omega \times (0,T] $ for some fixed time $ T>0. $
We consider the parabolic problem

\begin{align}\label{ParabolicPDE}
\begin{cases}
u_t + Lu = f ,\;\;\; \; \textrm{in} \;\: \Omega_T, \\
u= 0, \;\;\;\;\;\;\;\;\; \; \textrm{on} \;\: \partial \Omega \times (0,T],  \\
u = u^0, \;\;\;\;\;\;\;\;\, \textrm{on} \;\: \Omega \times \lbrace t=0 \rbrace \, .
\end{cases}
\end{align}
In  this section we discuss  convergence properties of approximations of \eqref{ParabolicPDE} obtained by minimisation of continuous and time-discrete energy functionals over appropriate sets of  neural network functions. We shall assume that $\Omega$ is a convex Lipschitz domain. The case of a non-convex domain can be treated with the appropriate modifications. 
%where $ f \in L^2( \Omega_T) \;,\; u^0 \in H^1_0(\Omega) $ and $ L $ is an elliptic operator as in the previous section. We follow the notation of \cite{Evans} to formulate the weak solution problem of \eqref{ParabolicPDE}, so let
%\begin{align*}
%u: [0,T] \rightarrow H_0^2(\Omega) \;\;\;,\; \textrm{defined by} \\
%[u(t)](x):= u(x,t) \;\;(x \in \Omega \;,\; 0 \leq t \leq T)
%\end{align*}
%and similarly for $ f,\:g $ and the coefficients of $ L $. We also write
%\begin{align*}
%u \in H^1(0,T ; H^2(\Omega)  \cap  H^1_0(\Omega)) \;,\; \textrm{with} \;\: u'=u_t \in L^2(0,T ; H^{-2}(\Omega))
%\end{align*}
%where $ H^{-2} $ is the dual space of $  H^2_0 $. The reason why we impose higher regularity is that, as in the previous section, we will relate the problem \eqref{ParabolicPDE} with an equivalent variational problem in which will be necessary.

\subsection{Exact time integrals}
So now we define $ \mathcal{G} : H^1(0,T ; L^2 (\Omega)) \cap L^2(0,T ; H^2(\Omega)  \cap  H^1_0(\Omega))\rightarrow \overline{\mathbb{R}} $ as follows
\begin{equation}\label{ParabFunctional}
\begin{gathered}
\mathcal{G}(v) =% || v_t + L v - f ||^2_{L^2(0,T; L^2(\Omega))} = 
\int_0^T \| v_t(t) + L v(t) - f(t) \|^2_{L^2(\Omega)} \d t +  | v(0) - u^0  |^2_{H^1(\Omega)}\, .
%\\
%= \int_0^T \int_{\Omega} | v_t(x,t) + L v(x,t) - f(x,t)|^2 \, \d x\d t = \int_{\Omega_T} | v_t + L v - f |^2 \, \d x\d t
\end{gathered}
\end{equation}
We use ${H^1(\Omega)}$ seminorm for the initial condition, since then the regularity properties of the functional are better. 
Of course, one can use the ${L^2(\Omega)}$ norm instead with appropriate modifications in the proofs.
%So, the problem \eqref{ParabolicPDE} can be written equivalently as
%\begin{equation}\label{ParabVariationalProblem}
%\min_{u \in H^1(0,T ; H^2(\Omega)  \cap  H^1_0(\Omega)) \; \textrm{and} \: u(0)=g} \mathcal{G}(u)
%\end{equation}

%Similarly as in the previous section, let $ W_\ell \;,\: k \in \mathbb{N} $ is a sequence of subsets of $ H^1(0,T ; H^2(\Omega)  \cap  H^1_0(\Omega)) $ with the properties
%\begin{equation}\label{DensityofW^k&existence}
%\begin{gathered}
%\begin{cases}
%\textrm{(i)} \;\: \forall \;\: u \in  H^1(0,T ; H^2(\Omega)  \cap  H^1_0(\Omega)) \;,\: \exists \;\: u_k \in W^k \;\: \textrm{such that} \;\: || u_k - u||_{H^1(0,T ; H^2(\Omega)  \cap  H^1_0(\Omega))} \rightarrow 0
%\\
%\textrm{(ii) there exists} \;\: u_k \in W^k \;\: \textrm{such that} \;\: \mathcal{G}(u_k)= \min_{u \in  W^k} \mathcal{G}(u)
%\end{cases}
%\end{gathered}
%\end{equation}
% 
%and again, as in Remark \ref{Rmk:NNapproximation}, tanh neural networks or ReLU$ ^k $ neural networks have the approximation properties \eqref{DensityofW^k&existence} and additional quantitative information that can be found in \cite{de2021approximation} and \cite{Xu} (or for many other choices of activation function $ \sigma $ in \eqref{C_L}, but for other choices we might not have high accuracy in approximation of Sobolev functions or precise quantitative information in the approximation).
As before,   we select a sequence of spaces  $W _{\mathcal{N}}  $ as follows: for each 
  $\ell \in \mathbb N$ we correspond a DNN space  $ W _{\mathcal{N}}  ,$
  which is denoted by  $W_\ell $ such that: For each $w\in H^1(0,T; L^2(\Omega))\cap L^2(0,T; H^2(\Omega)) $ 
  there exists a $w_\ell \in W_\ell$ such that,
    \begin{equation}\label{w_ell_7_TD}\begin{split} 
 \|w_{\ell}-w\|_{H^1(0,T; L^2(\Omega))\cap L^2(0,T; H^2(\Omega))}  \leq \  \beta _\ell \, (w), \qquad 
 \text{and } \ \beta _\ell \, (w) \to 0,  \  \ \ell\to \infty\, .	\end{split}
\end{equation}
If in addition, $w $ has higher regularity, we assume that   \begin{equation}\label{w_ell_7_hSm_TD}\begin{split} 
 \|(w_{\ell}-w)' \|_{H^1(0,T; L^2(\Omega))\cap L^2(0,T; H^2(\Omega))}   \leq \   \tilde \beta _\ell \, \| w'\| _{ H^m(0,T; H^2(\Omega))}  , \qquad 
 \text{and } \ \tilde \beta _\ell \, \to 0,  \  \ \ell\to \infty\, .	\end{split} \end{equation}
As in the elliptic case, we do not need specific rates for $ \tilde \beta _\ell \,   ,$
but only the fact that the right-hand side of \eqref{w_ell_7_hSm_TD} has an explicit dependence of   Sobolev norms of $w .$  
 {We further assume that (possibly by employing the analog of $V_{\mathcal{N}, 0}$)  if in addition $w$ belongs in $L^2(0,T ; H^1_0(\Omega)) $ then the sequence $w_\ell$ above can be selected to belong in $W_\ell \cap L^2(0,T ; H^1_0(\Omega)) \, .$}
See \cite{Grohs_space_time_approx:2022} and its references where space-time approximation 
properties of neural network spaces are derived, see also \cite{Xu}, \cite{de2021approximation} and Remark \ref{Rmk:NNapproximation}.  
%This assumption is a reasonable one in view of the available approximation results of neural network spaces, 
%see for example \cite{Dahmen_Grohs_DeVore:specialissueDNN:2022,Schwab_DNN_constr_approx:2022, Schwab_DNN_highD_analystic:2023, Mishra:appr:rough:2022, Grohs_Petersen_Review:2023} 
%and their references. 
In the sequel we consider the sequence of energies
\begin{align}\label{GdeltaEnergies}
\mathcal{G}_{\ell}(u_{\ell}) = \begin{cases} \mathcal{G}(u_\ell) ,\;\;\;\;\; u_\ell \in W_\ell \cap L^2(0,T ; H^1_0(\Omega)) \\ + \infty ,\;\;\;\;\; \;\; \textrm{otherwise}
\end{cases}
\end{align}
where $ W_\ell  $ is chosen as before.

\subsubsection{Stability}

Now we have stability of $ \mathcal{G}_{\ell} $ as a corollary of the following result, which follows from standard maximal regularity estimates, Theorem 5, p.382 in \cite{Evans}.

\begin{proposition}\label{EquicoercivityofG} The functional $ \mathcal{G} $ defined in \eqref{ParabFunctional} is equicoercive with respect to the $ \\ H^1(0,T ; H^2(\Omega)  \cap  H^1_0(\Omega)) $-norm. That is,
\begin{equation}\label{G Equicoercivity}
\begin{gathered}
\textrm{If} \;\; \mathcal{G}(u) \leq C \;\; \textrm{for some} \;\: C >0 \;,\; \textrm{we have} \\
||u||_{L^2(0,T; H^2(\Omega))} + ||u'||_{L^2(0,T; L^2(\Omega))} \leq C_1
\end{gathered}
\end{equation}
\end{proposition}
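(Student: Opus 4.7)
The plan is to reduce the stability estimate to an application of parabolic maximal regularity for the operator $\partial_t + L$. First I would unpack the hypothesis: writing out $\mathcal{G}(u) \leq C$ gives the two separate bounds
\[
  \int_0^T \|u_t + Lu - f\|_{L^2(\Omega)}^2 \, \d t \leq C, \qquad |u(0) - u^0|_{H^1(\Omega)}^2 \leq C.
\]
By the triangle inequality and the assumption $f \in L^2(\Omega_T)$, the quantity $g := u_t + Lu$ satisfies $\|g\|_{L^2(0,T;L^2(\Omega))} \leq C^{1/2} + \|f\|_{L^2(\Omega_T)}$. Similarly, since $u \in L^2(0,T; H^2_0(\Omega))$ and the trace $u(0)$ lies in $H^1_0(\Omega)$, the Poincar\'e inequality together with the second bound yields $\|u(0)\|_{H^1(\Omega)} \leq c_\Omega (C^{1/2} + \|u^0\|_{H^1(\Omega)})$.

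Next I would view $u$ as the (weak) solution of the linear parabolic problem
\[
  u_t + Lu = g \text{ in } \Omega_T, \qquad u|_{\partial\Omega} = 0, \qquad u(\cdot,0) = u(0),
\]
and apply the maximal parabolic regularity estimate for the self-adjoint elliptic operator $L$, namely
\[
  \|u'\|_{L^2(0,T;L^2(\Omega))} + \|u\|_{L^2(0,T;H^2(\Omega))} \leq C_L \bigl(\|g\|_{L^2(0,T;L^2(\Omega))} + \|u(0)\|_{H^1(\Omega)}\bigr).
\]
Combining this with the two bounds from the previous step immediately produces the required estimate with a constant $C_1$ depending on $\Omega$, $L$, $T$, $\|f\|_{L^2(\Omega_T)}$, $\|u^0\|_{H^1(\Omega)}$ and $C$.

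The main obstacle lies in quoting (or briefly justifying) the maximal regularity bound, because this is exactly the place where the convexity of $\Omega$ enters in an essential way. On a convex Lipschitz domain, $L$ enjoys the full $H^2$ elliptic regularity (Grisvard), and this is what allows the standard proof by Galerkin approximation and energy testing with $u_t$ (giving control of $\|u_t\|_{L^2(L^2)}$ and of $B(u,u)$ at each time), followed by testing with $Lu$, to close into the stated maximal regularity bound. On a non-convex Lipschitz domain this step would fail and one would obtain coercivity only in a strictly weaker norm, paralleling the elliptic situation of Proposition~\ref{Prop:EquicoercivityofE(2)}; this is precisely why the statement is restricted to the convex case.
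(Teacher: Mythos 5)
Your proposal is correct and follows essentially the same route as the paper: extract the bound $\|u_t+Lu\|_{L^2(0,T;L^2(\Omega))}\leq C_1$ from the energy bound and then invoke the maximal parabolic regularity estimate (the paper cites Evans, Theorem~5, p.~382) to control $\|u\|_{L^2(0,T;H^2(\Omega))}+\|u'\|_{L^2(0,T;L^2(\Omega))}$ by $\|u_t+Lu\|_{L^2(0,T;L^2(\Omega))}+\|u(0)\|_{H^1_0(\Omega)}$. You are in fact slightly more careful than the paper in explicitly using the initial-condition term of $\mathcal{G}$ together with Poincar\'e to bound $\|u(0)\|_{H^1(\Omega)}$, a step the paper leaves implicit.
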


\begin{proof}
As in the proof of equicoercivity for \eqref{Functional}, we have
\begin{equation}\label{Proofof G EquicoercEq1_3} \mathcal{G}(u) = \int_{\Omega_T} ( | u_t + L u|^2 -2 f \: ( u_t + L u ) + |f|^2 ) \leq C
\end{equation}
%From Holder's inequality we have
%\begin{equation}\label{Proofof G EquicoercEq2}
%\begin{gathered}
%C \geq \int_{\Omega_T} ( | u_t + L u |^2 + |f|^2  -2 f \: ( u_t + L u) ) \\ \geq || u_t + L u ||^2_{L^2(0,T; L^2(\Omega))}  + ||f||^2_{L^2(0,T; L^2(\Omega))}  -2|| u_t + L u ||_{L^2(0,T; L^2(\Omega))} || f||_{L^2(0,T; L^2(\Omega))} \\
%\Rightarrow (|| u_t + L u ||_{L^2(0,T; L^2(\Omega))} - || f||_{L^2(0,T; L^2(\Omega))})^2 \leq C
%\end{gathered}
%\end{equation}
Hence, one can conclude that   since $ f \in L^2(\Omega_T) $, 
\begin{equation}\label{Proofof G EquicoercEq4}
|| u_t + L u ||_{L^2(0,T; L^2(\Omega))} \leq C_1
\end{equation}
From regularity theory for parabolic equations (see for example Theorem 5, p.382 in \cite{Evans}) we have
\begin{equation}\label{ParabRegularity}
\begin{gathered}
\textrm{ess sup}_{0 \leq t \leq T} ||u(t)||_{H^1_0(\Omega)} + ||u||_{L^2(0,T; H^2(\Omega))} + ||u'||_{L^2(0,T; L^2(\Omega))} \\ \leq \tilde{C} ( || u_t + L u ||_{L^2(0,T; L^2(\Omega))} +  {|| u(0) ||_{H^1_0(\Omega)}})
\end{gathered}
\end{equation}
the constant $ \tilde{C} $ depending only on $ \Omega \;,\: T $ and the coefficients of $ L .   $ Notice that \eqref{ParabRegularity} is a maximal parabolic regularity estimate in ${L^2(0,T; L^2(\Omega))} \, .$ This completes the proof. 
%
%Thus, by \eqref{Proofof G EquicoercEq4}, \eqref{ParabRegularity} and since $ g \in H^1_0(\Omega) $, we conclude
%\begin{equation}\label{EquicBound}
%\begin{gathered}
%\textrm{ess sup}_{0 \leq t \leq T} ||u(t)||_{H^1_0(\Omega)} + ||u||_{L^2(0,T; H^2(\Omega))} + ||u'||_{L^2(0,T; L^2(\Omega))} \leq C_1 \\
%\Rightarrow ||u||_{L^2(0,T; H^2(\Omega))} + ||u'||_{L^2(0,T; L^2(\Omega))} \leq C_1 
%\end{gathered}
%\end{equation}
\end{proof}

\subsubsection{Compactness and Convergence of Discrete Minimizers}

%As in the previous section, from standard arguments in the theory of $ \Gamma $-convergence, we will prove that under some boundedness hypothesis on $ u_\ell $, the sequence of discrete minimizers $ (u_\ell) $ converges in $ L^2(0,T; H^1(\Omega))  $ to a global minimiser of the continuous functional. 
We will also need the well-known Aubin–Lions theorem as an analog of the Rellich-Kondrachov theorem in the parabolic case, that can be found, for example, in \cite{SZ}. The proof of Theorem \ref{Thm:convergenceofdiscreteminG} is similar to the convergence in the time-discrete case below, and it is omitted.

\begin{theorem}[Aubin-Lions]
\label{Aubin-LionsTheorem} Let $ B_0,B,B_1 $ be three Banach spaces where $ B_0, B_1 $ are reflexive. Suppose that $ B_0 $ is continuously imbedded into $ B $, which is also continuously imbedded into $ B _1$, and the imbedding from $ B_0 $ into $ B $ is compact. For any given $ p_0,p_1 $ with $ 1 < p_0,p_1 < \infty $, let
\begin{align}
W = \lbrace v \: | \: v \in L^{p_0}([0,T],B_0) \;,\; v_t \in L^{p_1}([0,T], B_1) \rbrace.
\end{align}
Then the imbedding from $ W $ into $ L^{p_0}([0,T],B) $ is compact.
\end{theorem}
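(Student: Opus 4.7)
\textbf{Proof plan for Theorem \ref{Aubin-LionsTheorem}.} The natural strategy is the classical Aubin--Lions one: combine weak compactness on the two factors defining $W$ with an interpolation estimate (Ehrling's lemma) that encodes the compact embedding $B_0 \hookrightarrow B$, and use the bound on $v_t$ in $L^{p_1}(0,T;B_1)$ to produce equicontinuity in time.

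First I would extract a candidate limit. Taking a bounded sequence $(v_n)\subset W$, reflexivity of $B_0,B_1$ together with Banach--Alaoglu yields, up to a subsequence, $v_n \rightharpoonup v$ in $L^{p_0}(0,T;B_0)$ and $\partial_t v_n \rightharpoonup \partial_t v$ in $L^{p_1}(0,T;B_1)$. Replacing $v_n$ by $v_n - v$, the task reduces to proving $v_n \to 0$ strongly in $L^{p_0}(0,T;B)$ along a further subsequence.

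The core analytic input is Ehrling's lemma: compactness of $B_0$ into $B$ combined with continuity of $B \hookrightarrow B_1$ gives, for every $\eta > 0$, a $C_\eta > 0$ such that $\|u\|_B \leq \eta\|u\|_{B_0} + C_\eta\|u\|_{B_1}$ for all $u \in B_0$. Raising to the $p_0$-th power and integrating in $t$ produces
$$\|v_n\|_{L^{p_0}(0,T;B)}^{p_0} \lesssim \eta^{p_0}\|v_n\|_{L^{p_0}(0,T;B_0)}^{p_0} + C_\eta^{p_0}\|v_n\|_{L^{p_0}(0,T;B_1)}^{p_0}.$$
The first factor on the right is uniformly bounded in $n$, so it suffices to establish the strong convergence $v_n \to 0$ in $L^{p_0}(0,T;B_1)$ and then let $\eta \to 0$.

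For this last step I would invoke a Fr\'echet--Kolmogorov compactness criterion in $L^{p_0}(0,T;B_1)$. The identity $v_n(t+h) - v_n(t) = \int_{t}^{t+h} \partial_s v_n\, \mathrm{d}s$ and H\"older's inequality yield a uniform time-translation estimate $\|v_n(\cdot+h) - v_n(\cdot)\|_{L^{p_1}(0,T-h;B_1)} \lesssim |h|^{1-1/p_1}$; together with uniform boundedness in $L^{p_0}(0,T;B_0)$ and the compactness $B_0 \hookrightarrow B_1$ (inherited from $B_0 \hookrightarrow B \hookrightarrow B_1$), this precompactifies $(v_n)$ in $L^{p_0}(0,T;B_1)$, and uniqueness of the weak limit forces the strong limit to be zero. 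I expect the main technical obstacle to be reconciling the two distinct exponents $p_0$ and $p_1$ in the Fr\'echet--Kolmogorov step, since the time-translation bound lives at integrability exponent $p_1$ while the target space uses $p_0$; the cleanest way to bridge this is through a mollification-in-time of $v_n$ together with an $\varepsilon$-argument controlling the mollification error via the bound $\|\partial_t v_n\|_{L^{p_1}(B_1)}\leq C$.
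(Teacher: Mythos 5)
The paper does not prove this statement at all: it is quoted as a known result (the Aubin--Lions compactness lemma) with a citation to the literature, so there is no internal proof to compare against. Your outline is the standard Aubin--Lions--Simon argument --- weak limits from reflexivity, Ehrling's interpolation inequality $\|u\|_B \le \eta\|u\|_{B_0} + C_\eta\|u\|_{B_1}$ to reduce matters to strong convergence in $L^{p_0}(0,T;B_1)$, and a time-translation estimate from the bound on $v_t$ in $L^{p_1}(0,T;B_1)$ to obtain compactness there --- and it is sound as a plan. The one genuinely delicate point is the one you already flag: the translation estimate lives at exponent $p_1$ while the target space is $L^{p_0}$, and when $p_1 < p_0$ the $L^{p_1}$ control of $v_n - v_n^\varepsilon$ does not directly upgrade to $L^{p_0}$; the mollification route works, but you must close it by combining Arzel\`a--Ascoli for the mollified sequence (valued in $B$, via the compact embedding) with a Vitali-type uniform integrability argument coming from the uniform $L^{p_0}(0,T;B_0)$ bound, rather than by the translation estimate alone. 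With that caveat made precise, your proposal reproduces the classical proof that the paper is implicitly relying on.
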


\begin{theorem}[Convergence of discrete   minimisers]
\label{Thm:convergenceofdiscreteminG} Let $ (u_\ell) \subset W_{\ell} $ be a sequence of minimizers of $ \mathcal{G}_\ell $, i.e.,
\begin{equation}\label{minofGdeltainWk}
\mathcal{G}_\ell (u_\ell) = \inf_{w_\ell \in W_\ell} \mathcal{G}_\ell (w_\ell)
\end{equation}
%If $ \mathcal{G}_\ell (u_\ell) $ is uniformly bounded then, up to a subsequence, there exists $ u \in H^1(0,T; H^2(\Omega)  \cap  H^1_0(\Omega)) $ such that
then
\begin{equation}\label{ConvOfDiscrMinG}
u_\ell \rightarrow u, \;\; \; \textrm{in} \;\: L^2(0,T; H^1(\Omega)) \, \;\: \; 
%\textrm{and} \;\: \mathcal{G}(u) = \min_{v \in H^1(0,T; H^2(\Omega)  \cap  H^1_0(\Omega)) \; \textrm{and} \; u(0)=g} \mathcal{G}(v)
\end{equation}
where $u $ is the solution of \eqref{ParabolicPDE}.
\end{theorem}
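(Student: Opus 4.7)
The plan is to replicate the $\liminf$--$\limsup$ scheme used in the elliptic case (Theorem \ref{Thrm:Gamma_reg_funct}), but now anchored on the parabolic equicoercivity (Proposition \ref{EquicoercivityofG}) together with the Aubin--Lions compactness (Theorem \ref{Aubin-LionsTheorem}). Since $\mathcal{G}(u)=0$ at the exact solution, the argument will ultimately identify the limit of any subsequence of minimisers as the unique weak solution of \eqref{ParabolicPDE}, upgrading subsequential convergence to convergence of the whole sequence.

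First I would produce a uniform energy bound. By standard parabolic maximal regularity, the weak solution $u$ of \eqref{ParabolicPDE} satisfies $u\in H^1(0,T;L^2(\Omega))\cap L^2(0,T;H^2(\Omega))\hookrightarrow C([0,T];H^1_0(\Omega))$, so assumption \eqref{w_ell_7_TD} provides a recovery sequence $\tilde u_\ell\in W_\ell$ with
\begin{equation*}
\tilde u_\ell \to u \quad \text{in } H^1(0,T;L^2(\Omega))\cap L^2(0,T;H^2(\Omega)).
\end{equation*}
The continuous embedding into $C([0,T];H^1)$ yields $\tilde u_\ell(0)\to u^0$ in $H^1$, hence $\mathcal{G}_\ell(\tilde u_\ell)\to \mathcal{G}(u)=0$. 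Since $u_\ell$ is a minimiser, $\mathcal{G}_\ell(u_\ell)\le \mathcal{G}_\ell(\tilde u_\ell)\le C$. Proposition \ref{EquicoercivityofG} then gives uniform bounds on $\|u_\ell\|_{L^2(0,T;H^2)}$ and $\|u_\ell'\|_{L^2(0,T;L^2)}$.

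Next I would extract a limit. Apply Theorem \ref{Aubin-LionsTheorem} with $B_0=H^2(\Omega)\cap H^1_0(\Omega)$, $B=H^1(\Omega)$, $B_1=L^2(\Omega)$, $p_0=p_1=2$, to pass (along a subsequence) to some $\tilde u\in L^2(0,T;H^2)\cap H^1(0,T;L^2)$ with
\begin{equation*}
u_\ell\to \tilde u \;\text{in}\; L^2(0,T;H^1(\Omega)),\qquad u_\ell \rightharpoonup \tilde u \;\text{in}\; L^2(0,T;H^2), \qquad u_\ell'\rightharpoonup \tilde u' \;\text{in}\; L^2(0,T;L^2).
\end{equation*}
In particular $(u_\ell)_t+Lu_\ell \rightharpoonup \tilde u_t+L\tilde u$ weakly in $L^2(\Omega_T)$, and the $H^1$-continuity in time gives $u_\ell(0)\rightharpoonup \tilde u(0)$ in $H^1$. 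Convexity of $\mathcal{G}$ (weak lower semicontinuity of both the parabolic residual and the $H^1$-seminorm of the initial trace) yields the $\liminf$ inequality
\begin{equation*}
\mathcal{G}(\tilde u)\ \le\ \liminf_{\ell\to\infty}\mathcal{G}_\ell(u_\ell)\ \le\ \limsup_{\ell\to\infty}\mathcal{G}_\ell(u_\ell)\ \le\ \lim_{\ell\to\infty}\mathcal{G}_\ell(\tilde u_\ell)\ =\ 0.
\end{equation*}
Hence $\mathcal{G}(\tilde u)=0$, which forces $\tilde u_t+L\tilde u=f$ a.e.\ in $\Omega_T$ with $\tilde u(0)=u^0$; uniqueness of the weak solution gives $\tilde u=u$, and since the limit is unique, the whole sequence converges.

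The main obstacle is the step where one passes from uniform boundedness of $\mathcal{G}_\ell(u_\ell)$ to a useful compactness statement in $L^2(0,T;H^1)$: the equicoercivity only delivers an $\ell$-independent bound in $L^2(0,T;H^2)\cap H^1(0,T;L^2)$, and the norm in the conclusion is strictly weaker, so the compactness must be obtained via Aubin--Lions rather than by direct coercivity. A secondary delicate point is handling the initial-condition contribution in $\mathcal{G}_\ell$: to ensure $\tilde u_\ell(0)\to u^0$ in $H^1$ in the recovery step, one truly needs the continuous embedding $H^1(0,T;L^2)\cap L^2(0,T;H^2)\hookrightarrow C([0,T];H^1_0)$, and if the exact solution lacks this regularity (e.g.\ in a non-convex setting) one would have to insert a mollification-in-time argument analogous to the one performed in the proof of Theorem \ref{Thrm:Gamma_reg_funct}.
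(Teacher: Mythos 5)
Your proposal is correct and follows essentially the same route as the paper: uniform energy bounds via a recovery sequence from \eqref{w_ell_7_TD}, equicoercivity from Proposition \ref{EquicoercivityofG}, compactness via Aubin--Lions with $B_0=H^2$, $B=H^1$, $B_1=L^2$, and the $\liminf$--$\limsup$ sandwich to identify the limit. The only cosmetic difference is that you exploit $\mathcal{G}(u)=0$ to conclude directly, whereas the paper phrases the final step as showing the limit is a global minimiser (as in Theorem \ref{Thrm:Gamma_reg_funct}); these are equivalent here.
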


\subsection{Time discrete training}

We shall study the stability and convergence properties of 
the minimisers of the problems, \eqref{Functional_k}:
\begin{equation}\label{ieE-minimize_k}
	\min  _ {v \in  V _{\mathcal{N}} }\mathcal{G}_{IE, k}(v)\, . 
\end{equation}
%
%A solution $v^\star \in V _{\mathcal{N}}  $  is called a $V _{\mathcal{N}}$ minimiser of $ \E _ {k} \, .$
%%
%It will be interesting to consider a seemingly similar (from the point of view of quadrature and approximation) discrete functional:
%\begin{equation}\label{Functional_k_Ex}
%\mathcal{E}_{k, EE} (v) = \sum _{n=1}^N \,  k_n \, \int_{\Omega }\big | \frac{v ^n-  v^{n-1}}{k_n} + L v ^{n-1}  - f^{n-1} \big |^2 \, \, \d x +\, \sigma   \int_{\Omega } |  v  - u^0 |^2 d  x,
%\end{equation}
%and compare its properties to the functional $ \E _ {k} \, , $ and the corresponding $V _{\mathcal{N}}$ minimisers.
 Next we introduce the \emph{time reconstruction}
   $\widehat U$ %:[0,T]\rightarrow V$ 
 of a time dependent function  $U$   to be the piecewise linear approximation   
of $U$ defined by linearly 
interpolating between the nodal values $U^{n-1}$ and $U^n$: 
\begin{equation}\label{def_hatU}
\widehat U(t):=\ell_0^n(t) U^{n-1}+ \ell_1^n(t) U^{n}, \quad t\in I_n,
\end{equation}
with
$
\ell_0^n(t) := (t^n-t)/k_n$  and  $\ell_1^n(t) := (t-t^{n-1})/k_n.$ This reconstruction of the discrete solution has been proven useful in various instances,  
see \cite{baiocchi_brezzi_par_1983}, \cite{nochettoSV2000posteriori}, \cite{Dem_Stuart_Tzavaras_2001} and for higher-order versions \cite{MakrN2006posteriori}.
Correspondingly, the piecewise constant interpolant of $U^{j}$  is denoted by $\overline U, $  
\begin{equation}\label{def_overU}
\overline U(t):= U^{n}, \quad t\in I_n\, .\end{equation}

 So now the discrete energy  $ \mathcal{G}_{IE, k}   $ can be written as follows
\begin{equation}\label{ParabFunctional_k}
\begin{split}
	\mathcal{G}_{IE, k} (U) = &\| \widehat U_t + L \overline U - \overline f \|^2_{L^2(0,T; L^2(\Omega))}  +\,     \int_{\Omega } |   \widehat U ^0  - u^0 |_{H^1(\Omega)}^2 \, \d  x\\
=& \int_0^T \|  \widehat U_t + L \overline U - \overline f \|^2_{L^2(\Omega)}\, \d t  +\,     \int_{\Omega } |   \widehat U^0  - u^0 |_{H^1(\Omega)}^2 \, \d  x\, .
\end{split}
\end{equation}

\subsubsection{Stability}

The stability of  $ \mathcal{G}_{IE, k} $ is proved next. 

\begin{proposition}\label{TD:EquicoercivityofG} The functional $ \mathcal{G}_{IE, k}$ defined in \eqref{ParabFunctional_k} is equicoercive with respect to $ \widehat U , \overline U $. That is,
\begin{equation}\label{GEquicoercivity_TD}
\begin{gathered}
\textrm{If} \;\; \mathcal{G}_{IE, k}(U) \leq C \;\; \textrm{for some} \;\: C >0 \;,\; \textrm{we have for $C_1$ independent of $U$ and $k$ that} \\
\|\overline U\|_{L^2(0,T; H^2(\Omega))} + \|\widehat U'\|_{L^2(0,T; L^2(\Omega))} \leq C_1\, .
\end{gathered}
\end{equation}
\end{proposition}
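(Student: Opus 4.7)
The plan is to mirror the continuous argument of Proposition \ref{EquicoercivityofG}, replacing the invoked continuous maximal parabolic regularity estimate by a \emph{discrete} maximal regularity estimate for the backward Euler scheme. First, from $\mathcal{G}_{IE,k}(U)\le C$ and the definition \eqref{ParabFunctional_k}, expanding the square and using $f\in L^2(\Omega_T)$ together with $u^0\in H^1_0(\Omega)$, I would extract the bounds
\begin{equation*}
\|\widehat U_t+L\overline U\|_{L^2(0,T;L^2(\Omega))}^2 \;\le\; C_1, \qquad |U^0|_{H^1(\Omega)}^2\;\le\; C_2,
\end{equation*}
where I used $\widehat U(0)=U^0$. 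Introducing the residual $r^n := \widehat U_t\big|_{I_n}+LU^n-f^n$, one obtains $\sum_n k_n\|r^n\|_{L^2}^2\le C$ and $U^n$ solves the perturbed backward Euler equation
\begin{equation*}
\frac{U^n-U^{n-1}}{k_n}+LU^n \;=\; f^n+r^n, \qquad n=1,\dots,N.
\end{equation*}

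The core step, and the main technical obstacle, is the discrete maximal $L^2(L^2)$-regularity estimate. I would test the equation above with $LU^n$ in $L^2(\Omega)$, multiply by $k_n$, and sum over $n$. Using that $L$ is self-adjoint, the cross term is $B((U^n-U^{n-1})/k_n,U^n)\cdot k_n = B(U^n-U^{n-1},U^n)$, and the symmetric-form polarization identity
\begin{equation*}
2B(U^n-U^{n-1},U^n) \;=\; B(U^n,U^n)-B(U^{n-1},U^{n-1})+B(U^n-U^{n-1},U^n-U^{n-1})
\end{equation*}
telescopes, leaving a nonnegative contribution at level $N$ together with $-B(U^0,U^0)$, which is controlled by $|U^0|_{H^1}^2$. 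On the right-hand side, applying Cauchy--Schwarz and Young's inequality absorbs an $\tfrac12\sum_n k_n\|LU^n\|_{L^2}^2$ term into the left, producing
\begin{equation*}
\sum_{n=1}^N k_n\|LU^n\|_{L^2(\Omega)}^2 \;=\; \|L\overline U\|_{L^2(0,T;L^2(\Omega))}^2 \;\le\; C_3.
\end{equation*}
Comparable estimates at the discrete level have been studied in \cite{kovacs2016stable,leykekhman2017discrete,akrivis2022maximal}; here the self-adjointness of $L$ makes the polarization approach clean.

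To finish, since $\Omega$ is convex and each $U^n$ vanishes on $\partial\Omega$, global $H^2$ elliptic regularity (as in \eqref{ProofofEquicoercEq5}) yields $\|U^n\|_{H^2(\Omega)}\lesssim \|LU^n\|_{L^2(\Omega)}+\|U^n\|_{L^2(\Omega)}$, and combined with the bound on $\|L\overline U\|_{L^2(L^2)}$ above (and a standard discrete Gronwall / Poincaré-type control of $\|U^n\|_{L^2}$ from the same testing argument) we obtain $\|\overline U\|_{L^2(0,T;H^2(\Omega))}\le C_4$. For the time derivative, the relation $\widehat U_t\big|_{I_n}=-LU^n+f^n+r^n$ gives
\begin{equation*}
\|\widehat U'\|_{L^2(0,T;L^2(\Omega))} \;\le\; \|L\overline U\|_{L^2(0,T;L^2(\Omega))} + \|\overline f\|_{L^2(0,T;L^2(\Omega))} + \Big(\sum_n k_n\|r^n\|_{L^2}^2\Big)^{1/2} \;\le\; C_5,
\end{equation*}
which together yields the claimed bound \eqref{GEquicoercivity_TD}. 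The delicate point throughout is that the telescoping identity demands the symmetric (self-adjoint) structure of $L$; without it one would need a considerably more involved discrete maximal regularity argument.
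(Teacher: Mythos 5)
Your proposal is correct and rests on exactly the same key idea as the paper's proof: the discrete maximal $L^2(L^2)$-regularity obtained from the telescoping identity $2\langle U^n-U^{n-1},LU^n\rangle=\langle LU^n,U^n\rangle-\langle LU^{n-1},U^{n-1}\rangle+\langle L(U^n-U^{n-1}),U^n-U^{n-1}\rangle$, which exploits the self-adjointness and positivity of $L$ and leaves only the initial term $\langle LU^0,U^0\rangle$ to control. The paper reaches the same conclusion by expanding $\|\widehat U_t+L\overline U\|^2_{L^2(0,T;L^2(\Omega))}$ directly rather than testing a perturbed backward Euler equation with $LU^n$ and absorbing via Young's inequality, but the two computations are algebraically equivalent.
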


\begin{proof}
We have $\int_{\Omega_T} ( |  \widehat U_t + L \overline U |^2 -2 \overline f \: (  \widehat U_t + L \overline U   ) + |\overline f|^2 ) \leq C\, .$
%\begin{equation}\label{Proofof G EquicoercEq1_1} %\mathcal{G}_{IE, k}(u) =
% \int_{\Omega_T} ( |  \widehat U_t + L \overline U |^2 -2 \overline f \: (  \widehat U_t + L \overline U   ) + |\overline f|^2 ) \leq C
%\end{equation}
Thus we can conclude that 
% Holder's inequality we have
%\begin{equation}\label{Proofof G EquicoercEq2}
%\begin{gathered}
%C \geq \int_{\Omega_T} ( | u_t + L u |^2 + |f|^2  -2 f \: ( u_t + L u) ) \\ \geq || u_t + L u ||^2_{L^2(0,T; L^2(\Omega))}  + ||f||^2_{L^2(0,T; L^2(\Omega))}  -2|| u_t + L u ||_{L^2(0,T; L^2(\Omega))} || f||_{L^2(0,T; L^2(\Omega))} \\
%\Rightarrow (|| u_t + L u ||_{L^2(0,T; L^2(\Omega))} - || f||_{L^2(0,T; L^2(\Omega))})^2 \leq C
%\end{gathered}
%\end{equation}
since $ f \in L^2(\Omega_T) $, we have the uniform bound $\| \widehat U_t + L \overline U \|_{L^2(0,T; L^2(\Omega))} \leq C_1\, . $
%\begin{equation}\label{Proofof G EquicoercEq4}
%\| \widehat U_t + L \overline U \|_{L^2(0,T; L^2(\Omega))} \leq C_1\, . 
%\end{equation}
We shall need a discrete maximal regularity estimate in the present Hilbert-space setting. To this end we observe, 
\begin{equation} \label{dMR}
	\begin{split}
  \|\widehat U_t + L \overline U     \|^2_{L^2(0,T; L^2(\Omega))} 
& =  \|\widehat U_t     \|^2_{L^2(0,T; L^2(\Omega))}  
+  \| L \overline U     \|^2_{L^2(0,T; L^2(\Omega))} 
+2 \sum _{n=1}^N \, \int_{I_n}  \, 
  \<   \widehat U_t , L \overline U     \> \, \ d t 
\\
& =  \|\widehat U_t     \|^2_{L^2(0,T; L^2(\Omega))}  
+  \| L \overline U     \|^2_{L^2(0,T; L^2(\Omega))} \\
& \qquad +2 \sum _{n=1}^N \, \int_{I_n}  \,   \<    \big [ \frac{U ^n-  U^{n-1}}{k_n}  \big ], L   U ^n          \> \, \ d t \\
%& =  \|\widehat U_t     \|^2_{L^2(0,T; L^2(\Omega))}  
%+  \| L \overline U     \|^2_{L^2(0,T; L^2(\Omega))} \\
%& \qquad +2 \sum _{n=1}^N \,    \,    \<    \big [  {U ^n-  U^{n-1}}   \big ], L   U ^n          \> \,   \\
& =  \|\widehat U_t     \|^2_{L^2(0,T; L^2(\Omega))}  
+  \| L \overline U     \|^2_{L^2(0,T; L^2(\Omega))}  
+ \<   L\,  {U ^N }   ,  U ^N\>        \\
& \qquad + \sum _{n=1}^N \,    \,   \<  L\,   \big [  {U ^n-  U^{n-1}}   \big ],   {U ^n-  U^{n-1}  }       \> \,  -  \<   L\,  {U ^0 }   ,  U ^0\> \, .
 \end{split}
\end{equation}
Since all but the last term $\<   L\,  {U ^0 }   ,  U ^0\> $ are positive, we conclude, 
\begin{equation}
	\begin{split}
   \|\widehat U_t     \|^2_{L^2(0,T; L^2(\Omega))}  
+  \| L \overline U     \|^2_{L^2(0,T; L^2(\Omega))}  
\leq \|\widehat U_t + L \overline U     \|^2_{L^2(0,T; L^2(\Omega))}  +  \<   L\,  {U ^0 }   ,  U ^0\> \, ,
 \end{split}
\end{equation}
and the proof is complete.
\end{proof}

\subsubsection{$\liminf$ inequality}\label{TD:liminf}
  We assume there is a sequence, still denoted by $ U_\ell $, such that $ \mathcal{G}_{IE, \ell} (U_\ell) \leq C $ uniformly in $ \ell $, otherwise $  \liminf_{\ell \rightarrow \infty}\mathcal{G}_{IE, \ell} (U_\ell) = + \infty . $
From the discrete stability estimate, the uniform bound $ \mathcal{G}_{IE, \ell} (U_\ell) \leq C $ implies that $\|\overline U_\ell\|_{L^2(0,T; H^2(\Omega))} + \|\widehat U_\ell'\|_{L^2(0,T; L^2(\Omega))} \leq C_1\, ,$
 are uniformly bounded. By the relative compactness in $L^2(0,T; L^2(\Omega))$ we have (up to a subsequence not re-labeled)  the existence of $u_{(1)}$ and $u_{(2)}$ such that 
\begin{equation}\label{proofGlowersemicontinuTDiscr}
\begin{gathered}
L \overline  U_\ell \rightharpoonup L u_{(1)}\;\: \textrm{and} \;\: \widehat U_\ell' \rightharpoonup u_{(2)} '\;\: \textrm{weakly in} \;\: L^2(0,T; L^2(\Omega))\, .
%\\ \Rightarrow u_\ell' +  L u_\ell - f \rightharpoonup u' + L u - f 
\end{gathered}
\end{equation}
Notice that, for any space-time test function $\varphi\in C^\infty _0 $ there holds
(we have set $\tilde \varphi ^n :=  \frac 1 {k_n}\int_{I_n } \varphi     \, \ d t $)
\begin{equation}\label{IE_mr_estimate}
	\begin{split}
-	&\int_0^T  \<  \widehat U _\ell  , \varphi ' \>   \d t   = 
\int_0^T  \<  \widehat U_\ell ' , \varphi \>   \d t \\
& \ = 
\sum _{n=1}^N \, \int_{I_n}  \< \big [ \frac{U_\ell  ^n-  U_\ell ^{n-1}}{k_n}  \big ], \varphi \>  \, \ d t 
=	\sum _{n=1}^N \,    \<     U _\ell ^n, \tilde \varphi ^n\> -   \<  U_\ell  ^{n-1},  \tilde \varphi ^n \>  \, \\
& \ =	\sum _{n=1}^N \,    \<     U _\ell ^n,   \varphi ^{n-1}\> -   \<  U_\ell ^{n-1},    \varphi ^{n-1} \>  
+ \sum _{n=1}^N \,    \<     U_\ell  ^n, \big [ \tilde \varphi ^n  - \varphi ^{n-1} \big ]\> -   \<  U_\ell ^{n-1},  \big [ \tilde \varphi ^n  - \varphi ^{n-1} \big ]\> \\
%& \ =	 - \sum _{n=1}^N \,    \<     U_\ell  ^n,   \varphi ^n -\varphi ^{n-1} \>   
%+ \sum _{n=1}^N \,    \<   \big [  U_\ell  ^n -      U_\ell ^{n-1}\big ], \big [ \tilde \varphi ^n  - \varphi ^{n-1} \big ]\>  \\
& \ =	 - \int_0^T  \<  \overline U _\ell  , \varphi ' \>   \d t  
+ \sum _{n=1}^N \,    \<   \big [  U _\ell ^n -      U_\ell  ^{n-1}\big ], \big [ \tilde \varphi ^n  - \varphi ^{n-1} \big ]\> \, . \\
	\end{split}
\end{equation}
By the uniform bound, 
$$\|\widehat U_\ell'\|^2_{L^2(0,T; L^2(\Omega))}  
=  \sum _{n=1}^N \, \frac 1 {k_n}\|  U_\ell  ^n -      U_\ell ^{n-1}\| ^2 _{L^2(\Omega)} \leq C_1 ^2 \, ,$$
and standard approximation properties for 
$\tilde \varphi ^n  - \varphi ^{n-1}$ we conclude that for any fixed test function, 
\begin{equation}
	\begin{split}
 &\int_0^T  \<  \widehat U _\ell  , \varphi ' \>   \d t     - \int_0^T  \<  \overline U _\ell  , \varphi ' \>   \d t  \to 0, 
 \qquad \ell \to \infty\, . 
	\end{split}
\end{equation}
We can conclude therefore that  $u_{(1)} = u_{(2)} = u$ and thus,  $ \widehat U_\ell' + L \overline  U_\ell -  \overline  f \rightharpoonup u' + L u - f , $
% \qquad 
$\ell \to \infty\, . $
%
%\begin{equation}\label{proofGlowersemicontinuTDiscr2}
%\begin{gathered}
%  \widehat U_\ell' + L \overline  U_\ell -  \overline  f \rightharpoonup u' + L u - f , 
% \qquad \ell \to \infty\, . 
%\end{gathered}
%\end{equation}
The convexity of $ \int_{\Omega_T} | \cdot |^2 $ implies weak lower semicontinuity, that is
\begin{equation}\label{proofGlowersemicontinuityeq2}
\int_{\Omega_T} | u' + L u -f |^2  \leq \liminf_{\ell \rightarrow \infty}  \int_{\Omega_T} |  \widehat U_\ell' + L \overline  U_\ell -  \overline  f|^2
\end{equation}
and therefore we conclude that $ \mathcal{G}(u) \leq \liminf_{\ell \rightarrow \infty} \mathcal{G}_{IE, \ell} (U_\ell).   $

\subsubsection{$\limsup$ inequality}\label{TD:limsup}
Let $ w \in H^1(0,T; L^2(\Omega))\cap L^2(0,T; H^2(\Omega)  \cap  H^1_0(\Omega))    $ We will now show the existence of a recovery sequence $(w_\ell)$ such that $w_\ell \to w$ and 
   $ \mathcal{G}(w) = \lim_{\ell \rightarrow \infty} \mathcal{G}_{IE, \ell} (w_\ell) .$
%   Since  $C^\infty(0,T; H^2(\Omega))$ is dense in $L^2(0,T; H^2(\Omega))$
 {By employing an one-dimensional (in the time variable) extension operator, as for instance the one used in the proof of \cite{Evans}[Theorem 1, Section 5.4], and using a regularisation with a smooth kernel,}
we can select a sequence $(w_\delta) \subset C^\infty(0,T; H^2(\Omega))$ with the properties
\begin{equation}
\begin{split}\label{w_delta_TD}
	\|&w -w_\delta\|_{H^1(0,T; L^2(\Omega))\cap L^2(0,T; H^2(\Omega))} \lesssim \delta \, , \quad \text{and,}\\
 |& w_\delta ' |_{H^1(0,T; L^2(\Omega))\cap L^2(0,T; H^2(\Omega))} \lesssim \frac{1}{\delta} \, .
% |w|_{H^1(0,T; L^2(\Omega))\cap L^2(0,T; H^2(\Omega))}.
	\end{split}
\end{equation}

% \textcolor{red}{
%Let $ u \in H^1(0,T; H^2(\Omega)) $, by \eqref{DensityofW^k&existence} (i), there exists $ u_\ell \in W_\ell $ such that $ u_\ell \rightarrow u $ in $ H^1(0,T; H^2(\Omega)) $ (and in particular $ u_\ell \rightarrow u $ in $ H^1(0,T; H^1(\Omega)) $). 
%We have $ u_\ell \rightarrow u $ in $ H^1(0,T; H^2(\Omega)) $, so $ a_{ij} \partial_{x_i x_j} u_\ell \rightarrow a_{ij} \partial_{x_i x_j} u $ in $ H^1(0,T; L^2(\Omega)) $ and thus, $ L u_\ell \rightarrow Lu $ in $ H^1(0,T; L^2(\Omega)) $. Also, $ u_\ell' \rightarrow u' $ in $ L^2(0,T; H^2(\Omega))  $, therefore $ u_\ell' + L u_\ell \rightarrow u' + Lu $ in $ L^2(0,T; L^2(\Omega)) $.}
If $w_{\delta, \ell}\in W_\ell$ is a neural network function satisfying \eqref{w_ell_7_TD}, \eqref{w_ell_7_hSm_TD},  we would like to show 
\begin{equation}\label{GL2conv}
||\widehat w_{\delta, \ell} ' + L \, \overline w_{\delta, \ell} -\overline f||_{L^2(0,T; L^2(\Omega))} \rightarrow ||w' + L w -f||_{L^2(0,T; L^2(\Omega))}
\end{equation}
where $\delta = \delta (\ell)$ is appropriately selected. 
Then,
\begin{equation}\label{GL2limit}
\mathcal{G}_{IE, \ell} (w_{\delta, \ell}) \rightarrow \mathcal{G}(w)\, .
\end{equation}
To this end it suffices to consider the difference
\begin{equation}\label{GL2conv_3}
\|\widehat w_{\delta, \ell} ' + L \, \overline w_{\delta, \ell} -  w' - L w  \|_{L^2(0,T; L^2(\Omega))} \, .
\end{equation}
We have 
\begin{equation}
	\begin{split}
 \|\widehat w_{\delta, \ell} '  + L \, \overline w_{\delta, \ell} -   w' - L w   \|_{L^2(0,T; L^2(\Omega))}
 \leq & \|\widehat w_{\delta, \ell} '  + L \, \overline w_{\delta, \ell} -   \widehat w_{\delta} '  -L \, \overline w_{\delta }   \|_{L^2(0,T; L^2(\Omega))}\\
&+ \|\widehat w_{\delta } ' + L \, \overline w_{\delta } -  w' -  L w  \|_{L^2(0,T; L^2(\Omega))}\\
= & : A_1 +A_2\, .
	\end{split}
\end{equation}
To estimate $A_1 $ we proceed as follows: Let $\theta _\ell (t) : = w_{\delta, \ell} (t)     -    w_{\delta}(t)\, .$ Then, 
\begin{equation}
	\begin{split}
  \|\widehat w_{\delta, \ell} '   & -   \widehat w_{\delta} '     \|^2_{L^2(0,T; L^2(\Omega))} 
= \sum _{n=1}^N \, \int_{I_n}  \,  \big \|  \frac{\theta  _\ell  ^n-  \theta  _\ell ^{n-1}}{k_n}  \big  \|^2 _{ L^2(\Omega)} \, \ d t \\
%& = \sum _{n=1}^N \,    \, \frac 1 {k_n}  \big \| \theta  _\ell  ^n-  \theta  _\ell ^{n-1} \big  \|^2 _{ L^2(\Omega)} \,   \\
& = \sum _{n=1}^N \,   \, \frac 1 {k_n}  \big \| \int_{I_n} \theta  _\ell  ' (t)  \, \ d t \big  \|^2 _{ L^2(\Omega)}   \leq  \sum _{n=1}^N \,   \, \frac 1 {k_n}  \int_{I_n}  \big \| \theta  _\ell  ' (t) \big  \|^2 _{ L^2(\Omega)} \, \ d t \, k_n =   \|\theta  _\ell  '     \|^2_{L^2(0,T; L^2(\Omega))} \, .
	\end{split}
\end{equation}
Similarly, 
\begin{equation}
	\begin{split}
  \|L \, \overline w_{\delta, \ell}   & -L \, \overline w_{\delta }     \| _{L^2(0,T; L^2(\Omega))} 
  = \Big \{ \sum _{n=1}^N \, \int_{I_n}  \,  \big \| L \,   \theta  _\ell  ^n  \big  \|^2 _{ L^2(\Omega)} \, \ d t  \Big \}^{1/2} \\
  &\leq \Big \{ \sum _{n=1}^N \,k_n  \,  \big \| L \,   \theta  _\ell  ^n 
  	- \frac 1 {k_n}  \int_{I_n} L \,   \theta  _\ell  (t) \d t  \big  \|^2 _{ L^2(\Omega)} \,  \Big \}^{1/2}
  +\Big \{ \sum _{n=1}^N \,  \frac 1 {k_n} \,  \big \|      \int_{I_n} L \,   \theta  _\ell  (t) \d t \big  \|^2 _{ L^2(\Omega)} \,  \Big \}^{1/2}\\
%   &\leq \Big \{ \sum _{n=1}^N \,k_n  \,  \big \| L \,   \theta  _\ell  ^n 
%  	- \frac 1 {k_n}  \int_{I_n} L \,   \theta  _\ell  (t) \d t  \big  \|^2 _{ L^2(\Omega)} \,  \Big \}^{1/2}
%  +\Big \{ \sum _{n=1}^N \,   \int_{I_n} \,  \big \|      L \,   \theta  _\ell  (t)   \big  \|^2 _{ L^2(\Omega)} \, \ d t  \Big \}^{1/2}\\
  & \leq \Big \{ \sum _{n=1}^N \,k_n  \,  \big \| L \,   \theta  _\ell  ^n 
  	- \frac 1 {k_n}  \int_{I_n} L \,   \theta  _\ell  (t) \d t  \big  \|^2 _{ L^2(\Omega)} \,  \Big \}^{1/2}
  	+  \|L\, \theta  _\ell       \| _{L^2(0,T; L^2(\Omega))} \, .
	\end{split}
\end{equation}
It remains to estimate, 
\begin{equation}
	\begin{split}
 \Big \{ \sum _{n=1}^N \,k_n  \,  \big \| L \,   \theta  _\ell  ^n 
  &	- \frac 1 {k_n}  \int_{I_n} L \,   \theta  _\ell  (t) \d t  \big  \|^2 _{ L^2(\Omega)} \,  \Big \}^{1/2}
  =\Big \{ \sum _{n=1}^N \, \frac 1 {k_n}   \,  \big \|  \int_{I_n}\, \big [ L\,  \theta  _\ell  ^n - L \,   \theta  _\ell  (t) \big ]\, \d t  \big  \|^2 _{ L^2(\Omega)} \,  \Big \}^{1/2}\\
  &\leq 
   \Big \{ \sum _{n=1}^N \, \frac 1 {k_n}   \,  \big [  \int_{I_n}\, \big \| L\,  \theta  _\ell  ^n - L \,   \theta  _\ell  (t) \big \| _{ L^2(\Omega)}\, \d t  \big  ] ^2  \,  \Big \}^{1/2}    \\
    &\leq 
   \Big \{ \sum _{n=1}^N \, \frac 1 {k_n}   \,  \big [ \int_{I_n}\, \int_{I_n}\, \big \|  L \,   \theta  _\ell  ' (s) \big \| _{ L^2(\Omega)}\, \d s \, \d t  \big  ] ^2  \,  \Big \}^{1/2}    \\
   &=
   \Big \{ \sum _{n=1}^N \,   {k_n}   \,  \big [    \int_{I_n}\, \big \|  L \,   \theta  _\ell  ' (t) \big \| _{ L^2(\Omega)} \, \d t  \big  ] ^2  \,  \Big \}^{1/2}    \leq k\,   \|L\, \theta  _\ell  '     \| _{L^2(0,T; L^2(\Omega))} \, .
	\end{split}
\end{equation}
We conclude therefore that,  $k=\max_n k_n,$
\begin{equation}
	\begin{split}
A_2 \leq \|  \theta  _\ell  '     \| _{L^2(0,T; L^2(\Omega))}  + \|L\, \theta  _\ell       \| _{L^2(0,T; L^2(\Omega))} + k\,   \|L\, \theta  _\ell  '     \| _{L^2(0,T; L^2(\Omega))} \, . 
	\end{split}
\end{equation}
On the other hand, standard time interpolation estimates yield, 
  \begin{equation}
	\begin{split}
A_1 \leq C \, k \, \big [ 
\|  w_{\delta } '' \| _{L^2(0,T; L^2(\Omega))}  + \| L \,   w_{\delta}'       \|_{L^2(0,T; L^2(\Omega))}  \big ]\, .
	\end{split}
\end{equation}
Hence, we have using \eqref{w_ell_7_TD}, \eqref{w_ell_7_hSm_TD}, \eqref{w_delta_TD}, 
\begin{equation}
	\begin{split}
A_1+ A_2 
\leq  & \, \beta_\ell (w_\delta) + C \frac k 
{\delta ^{m+1} }  %\tilde \beta_\ell  \| w\| _{L^2(0,T; H^2(\Omega))}     
+ C \frac k 
{\delta   }  % \| w\| _{H^1(0,T; L^2(\Omega))\cap L^2(0,T; H^2(\Omega))}\, .
%
%\theta  _\ell  '     \| _{L^2(0,T; L^2(\Omega))}  + \|L\, \theta  _\ell       \| _{L^2(0,T; L^2(\Omega))} + k\,   \|L\, \theta  _\ell  '     \| _{L^2(0,T; L^2(\Omega))} \, . 
	\end{split}
\end{equation}
Therefore, we conclude that \eqref{GL2conv} holds upon selecting  $\delta = \delta (\ell, k)$   appropriately.   

\subsubsection{\it Convergence of the minimisers}
 In this subsection, we conclude the proof  that the sequence of discrete minimisers $ (u_\ell) $ converges in $L^2(0,T; H^1(\Omega))$ to the    minimiser of the continuous problem.  

\begin{theorem}[Convergence]\label{Thrm:TimeD} Let $ \mathcal{G},\; \mathcal{G}_{IE, \ell} $ be the energy functionals defined in \eqref{ParabFunctional} and \eqref{Functional_k} respectively. 
Let $u$ be the exact solution of \eqref{ParabolicPDE} and let $ (u_\ell) , $ $u_\ell \in V_{\ell}, $ be a sequence of minimisers of $ \mathcal{G}_{IE, \ell}  $, i.e.
\begin{equation}\label{minofTDEnergy}
\mathcal{G}_{IE, \ell} (u_\ell) = \inf_{v_\ell \in W_\ell} \mathcal{G}_{IE, \ell} (v_\ell)\, .
\end{equation}
%If $ \mathcal{E}_\ell (u_\ell) $ is uniformly bounded then, up to a subsequence, there exists $ u \in H^2(\Omega)  \cap  H^1_0(\Omega) $ such that
Then, 
\begin{equation}\label{TDConvOfDiscrMinE}
\hat u_\ell \rightarrow u,  \;\; \; \textrm{in} \;\: L^2(0,T; H^1(\Omega))  ,\;\ \  
\end{equation}
where $\hat u_\ell $ is defined by \eqref{def_hatU}.
%
%
%It holds that
%\begin{equation}\label{GammaLimitofEdelta}
%\Gamma- \lim_{\ell \rightarrow \infty} \mathcal{E}_\ell (u_\ell).= \mathcal{E}(u)
%\end{equation}
%with respect to the $ H^1 $-topology.  
\end{theorem}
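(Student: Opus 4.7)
The plan is to combine the stability result (Proposition \ref{TD:EquicoercivityofG}), the $\liminf$ inequality of Subsection \ref{TD:liminf}, and the $\limsup$ inequality of Subsection \ref{TD:limsup} in the standard De Giorgi framework, exactly as in the proof of Theorem \ref{Thm:convergenceofdiscreteminG}, but now applied to the reconstructions $\widehat U_\ell,\,\overline U_\ell$ rather than a smooth approximation. First I would feed the recovery sequence $(\tilde u_{\delta(\ell),\ell})$ built in Subsection \ref{TD:limsup} associated with $w=u$ into the minimisation inequality $\mathcal{G}_{IE,\ell}(u_\ell)\le \mathcal{G}_{IE,\ell}(\tilde u_{\delta(\ell),\ell})$ and use \eqref{GL2limit} with the fact that $\mathcal{G}(u)=0$ to deduce that the discrete energies $\mathcal{G}_{IE,\ell}(u_\ell)$ are uniformly bounded.

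Proposition \ref{TD:EquicoercivityofG} then gives $\|\overline u_\ell\|_{L^2(0,T;H^2(\Omega))} + \|\widehat u_\ell{}'\|_{L^2(0,T;L^2(\Omega))}\le C_1$. Next I would transfer this bound to $\widehat u_\ell$ itself: since $\widehat u_\ell$ is the piecewise linear interpolant of the nodal values of $\overline u_\ell$, a standard nodal interpolation estimate gives $\|\widehat u_\ell\|_{L^2(0,T;H^2(\Omega))}\lesssim \|\overline u_\ell\|_{L^2(0,T;H^2(\Omega))}$, so $\widehat u_\ell\in H^1(0,T;L^2(\Omega))\cap L^2(0,T;H^2(\Omega))$ uniformly. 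Applying the Aubin–Lions theorem (Theorem \ref{Aubin-LionsTheorem}) with $B_0=H^2(\Omega),\ B=H^1(\Omega),\ B_1=L^2(\Omega)$ extracts a subsequence with $\widehat u_\ell\to \tilde u$ strongly in $L^2(0,T;H^1(\Omega))$; the computation in Subsection \ref{TD:liminf} showing $u_{(1)}=u_{(2)}$ (via the identity \eqref{IE_mr_estimate}) guarantees that $\overline u_\ell$ shares the same limit $\tilde u$, and that the weak limits of $L\overline u_\ell$ and $\widehat u_\ell{}'$ are $L\tilde u$ and $\tilde u'$ respectively.

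With the $\liminf$ inequality $\mathcal{G}(\tilde u)\le \liminf_\ell \mathcal{G}_{IE,\ell}(u_\ell)$ and the $\limsup$ inequality providing, for every admissible test $w$, a recovery sequence $w_\ell\in W_\ell$ with $\mathcal{G}_{IE,\ell}(w_\ell)\to \mathcal{G}(w)$, I would chain the inequalities
\begin{equation*}
\mathcal{G}(\tilde u)\le \liminf_{\ell\to\infty}\mathcal{G}_{IE,\ell}(u_\ell)\le \limsup_{\ell\to\infty}\mathcal{G}_{IE,\ell}(u_\ell)\le \limsup_{\ell\to\infty}\mathcal{G}_{IE,\ell}(w_\ell)=\mathcal{G}(w),
\end{equation*}
valid for every $w\in H^1(0,T;L^2(\Omega))\cap L^2(0,T;H^2(\Omega))$ with $w|_{\partial\Omega}=0$. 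Thus $\tilde u$ is a global minimiser of $\mathcal{G}$. Taking $w=u$ (the exact solution of \eqref{ParabolicPDE}) gives $\mathcal{G}(\tilde u)=\mathcal{G}(u)=0$, which forces $\tilde u_t+L\tilde u=f$ a.e.\ with $\tilde u(0)=u^0$; by uniqueness of weak solutions $\tilde u=u$. Since every subsequence has a further subsequence converging to the same limit $u$, the full sequence $\widehat u_\ell\to u$ in $L^2(0,T;H^1(\Omega))$.

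The main obstacle is not the $\Gamma$-convergence machinery itself, which is routine once \ref{TD:liminf} and \ref{TD:limsup} are in place, but rather the verification that the piecewise-linear and piecewise-constant reconstructions share a common strong limit in $L^2(0,T;H^1(\Omega))$. This requires simultaneously using the discrete maximal regularity bound on $\widehat u_\ell{}'$ to control $\|\widehat u_\ell-\overline u_\ell\|$ via telescoping, and a time-interpolation estimate to transfer the $L^2(0,T;H^2)$ bound from $\overline u_\ell$ to $\widehat u_\ell$; both are already implicit in the computations of Subsection \ref{TD:liminf}, and I would simply make them explicit here.
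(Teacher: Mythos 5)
Your proposal is correct and follows essentially the same route as the paper's proof: uniform energy bounds via the recovery sequence for $w=u$, the stability Proposition \ref{TD:EquicoercivityofG} plus Aubin--Lions to extract a strong $L^2(0,T;H^1(\Omega))$ limit of $\widehat u_\ell$, and the $\liminf$--$\limsup$ chain from Sections \ref{TD:liminf} and \ref{TD:limsup} to identify the limit with $u$. The extra details you make explicit (transferring the $H^2$ bound from $\overline u_\ell$ to $\widehat u_\ell$, identifying the limit via $\mathcal{G}(\tilde u)=0$ and uniqueness, and the subsequence principle for convergence of the full sequence) are consistent with, and only sharpen, the paper's argument.
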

\begin{proof}
Next, let $u\in L^2(0,T; H^2(\Omega))\cap H^1(0,T; L^2(\Omega)) $ be the   solution of \eqref{ParabolicPDE}. 
Consider the sequence of minimisers $(u_\ell)\, .$ Obviously, 
$$ \mathcal{G}_{IE, \ell}  (u_\ell) \leq \mathcal{G}_{IE, \ell} (v_\ell), \qquad \text{for all } v_\ell \in V_\ell\, . $$
In particular,  
$$ \mathcal{G}_{IE, \ell}  (u_\ell) \leq \mathcal{G}_{IE, \ell}  (\tilde u_\ell),  $$
where $\tilde u_\ell $ is the recovery sequence $w_{\delta, \ell} $ corresponding to $w=u$ constructed  above. 
Hence, we conclude that the sequence $ \mathcal{G}_{IE, \ell}  (u_\ell)$ is   uniformly  bounded. The stability-equi-coercivity of the discrete functional, see Proposition \ref{TD:EquicoercivityofG},    implies   
  that
\begin{align}
\|\overline u_\ell\|_{L^2(0,T; H^2(\Omega))} +
\|\widehat u_\ell \|_{L^2(0,T; H^2(\Omega))} + \|\widehat u_\ell'\|_{L^2(0,T; L^2(\Omega))} \leq C  \, .
\end{align} 
  The  Aubin-Lions theorem ensures that there exists $   \tilde u\in L^2(0,T; H^1(\Omega)) $ 
such that $ \widehat u _\ell   \rightarrow   \tilde u$ in 
$L^2(0,T; H^1(\Omega))  $ up to a subsequence not re-labeled.
Furthermore the previous analysis shows that $L\tilde u \in L^2(0,T; L^2(\Omega))\, .  $
To prove that $   \tilde u $ is the minimiser of $ \mathcal{G} ,$ and hence $   \tilde u= u,$ we combine the 
results of Sections \ref{TD:liminf} and  \ref{TD:limsup}: Let   $ w \in H^1(0,T; L^2(\Omega))\cap L^2(0,T; H^2(\Omega)) . $ We did show the existence of a recovery sequence $(w_\ell)$ such that $w_\ell \to w$ and 
   $$ \mathcal{G}(w) = \lim_{\ell \rightarrow \infty} \mathcal{G}_{IE, \ell}  (w_\ell) .$$
   Therefore,  the $\liminf$ inequality 
and the fact that $ u_\ell $ are   minimisers of the discrete problems imply that
\begin{align}
 \mathcal{G} (\tilde u)  
 \le  \liminf_{\ell \rightarrow \infty}   \mathcal{G} _{IE, \ell}  (u_\ell)  
 \le  \limsup_{\ell \rightarrow \infty} 
 \mathcal{G}_{IE, \ell}  (u_\ell) 
 \le  \limsup_{\ell \rightarrow \infty} 
 \mathcal{G} _{IE, \ell}  (w_\ell)  =  
 \mathcal{G}  (w) , 
\end{align}
for all $ w \in H^1(0,T; L^2(\Omega))\cap L^2(0,T; H^2(\Omega)) . $  Therefore $  \tilde u $ is   the  minimiser of $  \mathcal{G} ,$ hence 
$   \tilde u= u$ and the entire sequence satisfies 
$$
\hat u_\ell \rightarrow u,  \;\; \; \textrm{in} \;\: L^2(0,T; H^1(\Omega))  .\;\ \  
$$
Therefore the proof is complete. \end{proof}

\subsubsection{\it Explicit time discrete training}
It will be interesting to consider a seemingly similar (from the point of view of quadrature and approximation) discrete functional:
\begin{equation}\label{Functional_k_Ex}
\mathcal{G}_{k, EE} (v) = \sum _{n=1}^N \,  k_n \, \int_{\Omega }\big | \frac{v ^n-  v^{n-1}}{k_n} + L v ^{n-1}  - f^{n-1} \big |^2 \, \, \d x +\,     \int_{\Omega } |  v  - u^0 |^2 d  x,
\end{equation}
and compare its properties to the functional $\mathcal{G}_{k, IE} (v) $ and the corresponding $V _{\mathcal{N}}$ minimisers. The functional \eqref{Functional_k_Ex}  is related to \emph{explicit} Euler discretisation in time as opposed to the \emph{implicit} Euler discretisation in time for $\mathcal{G}_{k, IE} (v).$ Clearly, in the discrete minimisation framework, both energies are fully implicit, since 
the evaluation of the minimisers involves the solution of global space-time 
problems. It is therefore rather interesting that these two energies result in completely different stability properties. 

Let us first note that it does not appear possible that a discrete coercivity such as \eqref{GEquicoercivity_TD} can be proved. Indeed, an argument similar to \eqref{dMR} is possible but with the crucial difference that the second to last term of this relation will be negative instead of positive. This is a fundamental point directly related to the (in)stability of the forward Euler method. Typically for finite difference forward Euler schemes one is required to assume a strong CFL condition of the 
form $k\leq C h^2 $ where $h $ is the spatial discretisation parameter
to preserve stability. It appears that a phenomenon of similar nature is present in our case as well. Although we do not show stability bounds when spatial training is taking place, the numerical experiments show that the stability behaviour of the explicit training method deteriorates when we increase the number of spatial training points while keeping $k$ constant. These stability considerations are verified by the numerical experiments we present below. Indeed, these computations provide convincing evidence that 
coercivity bounds similar to  \eqref{GEquicoercivity_TD} are necessary for  stable behaviour of the approximations.   In the computations we solve the one dimensional heat equation with zero boundary conditions  and two different initial values plotted in black. All runs were performed using the package \emph{DeepXDE}, \cite{Karniadakis:pinn:dxde:2021}, with random spatial training and constant time step. 

 \begingroup
\footnotesize

\begin{figure}[h]
\centering
\includegraphics[width=0.40\textwidth]{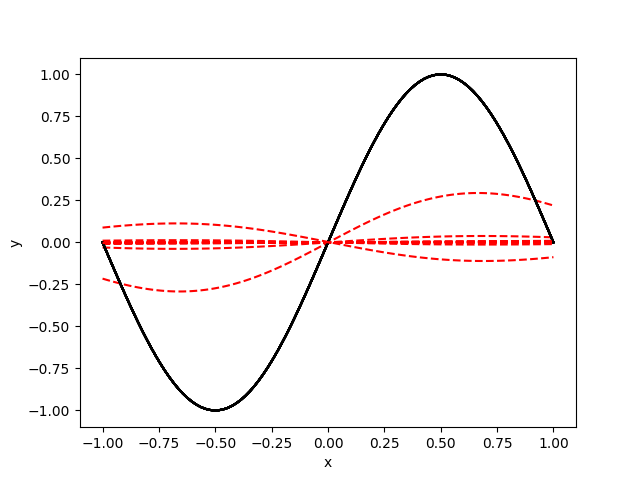}
\includegraphics[width=0.40\textwidth]{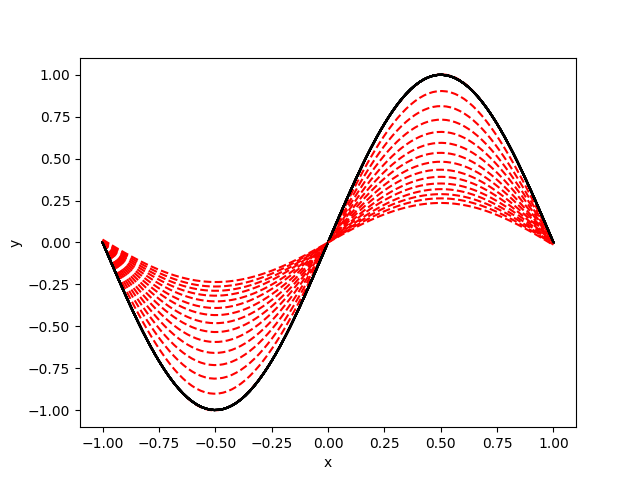}
\caption{\footnotesize\sl Explicit time discrete training. Left: time step  $0.4:$ the approximate solution  seems that diverge. 
 Right: time step  $0.01:$  with much smaller time step the approximate solution has stable behaviour.  }
%\figurecaption {djhw}
\end{figure}
\begin{figure}[h]
\centering
\includegraphics[width=0.40\textwidth]{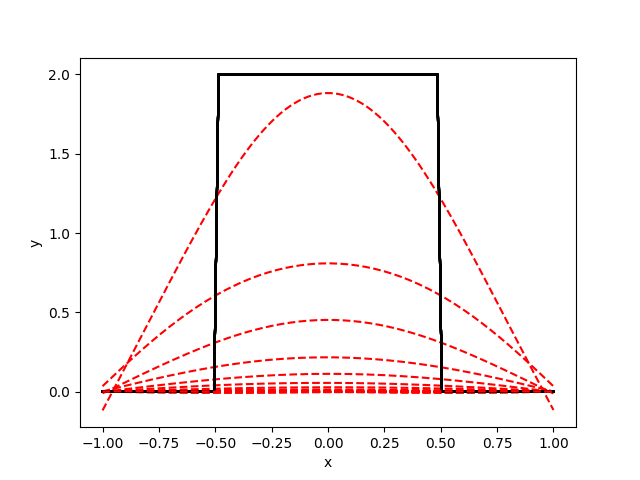}
\includegraphics[width=0.40\textwidth]{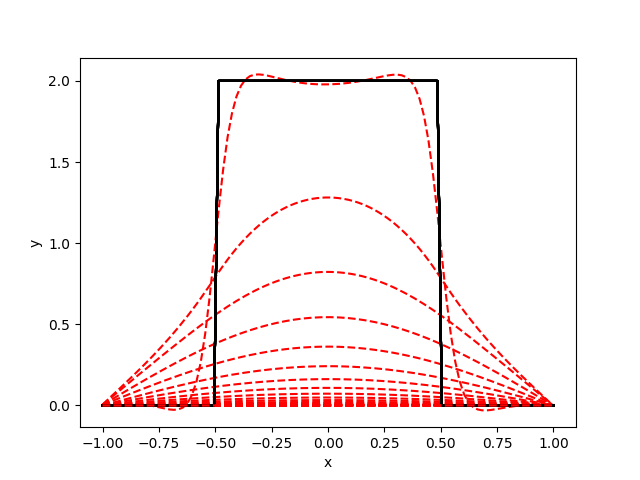}
\caption{\footnotesize\sl The approximations at times $t_n= n(0.2)$,  $n=1, 2, \dots , $ are displayed with red and the initial condition with black.  Left: Explicit time discrete training with time step  $0.2$ and $16$ training points.  The approximate solution  seems that diverge.  
Left: Implicit time discrete training with time step  $0.2$ and $16$ training points.   The approximate solution  seems reasonable.  }
%\figurecaption {djhw}
\end{figure}
\begin{figure}[h]
\centering
\includegraphics[width=0.40\textwidth]{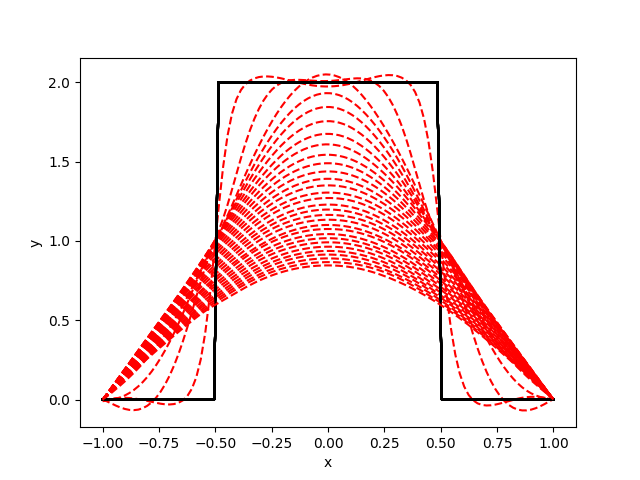}
\includegraphics[width=0.40\textwidth]{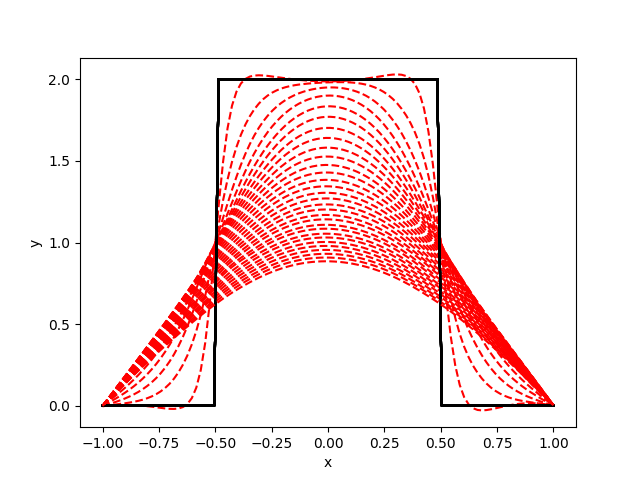}
\caption{\footnotesize\sl Left: Explicit time discrete training with time step  $0.01$ and $16$ training points.  The approximate solution  seems reasonable due to the much smaller time step. 
Left: Implicit time discrete training with time step  $0.01$ and $16$ training points.    }%\figurecaption {djhw}
\end{figure}
\begin{figure}[h]
\centering
\includegraphics[width=0.40\textwidth]{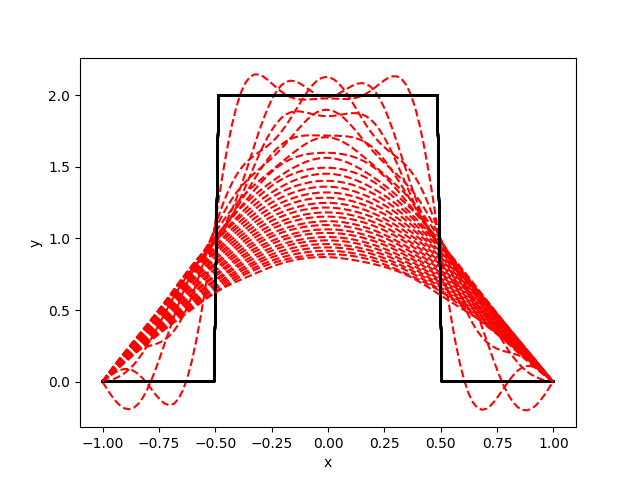}
\includegraphics[width=0.40\textwidth]{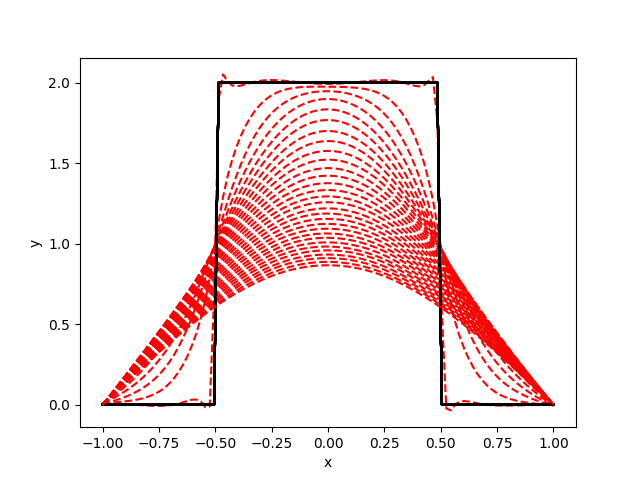}
\caption{\footnotesize\sl Left: Explicit time discrete training with time step  $0.01$ and $100$ training points.  Initial instabilities are again evident,  due to the higher number of spatial training points while the time step is the same as in Figure 3. 
Left: Implicit time discrete training with time step  $0.01$ and $100$ training points.    }
%\figurecaption {djhw}
\end{figure}

\endgroup
 
\newpage
\noindent
 {\textbf {Acknowledgments. \ } We would like to express our sincere gratitude to a referee of this paper for their  suggestions. They   brought to our attention that a density argument employed in a previous version was not valid as stated and pointed out  Proposition 2.8 of \cite{Tran2024}.
We also want to thank  G. Savar\' e for his insightful suggestions regarding the properties of solutions to elliptic problems in Lipschitz domains and K.\ Koumatos for several related discussions.
Furthermore, we would like to thank G.\ Akrivis, E.\ Georgoulis, G.\ Karniadakis, T.\ Katsaounis,  M.\ Loulakis, P.\ Rosakis, A.\ Tzavaras, and J.\ Xu for their valuable discussions and suggestions.}

 \begingroup
\footnotesize
\bibliographystyle{abbrv}
\bibliography{ml_bibliography}

\endgroup
\end{document}